\newtheoremstyle{dotless}
  {6pt}
  {6pt}
  {\upshape}
  {}
  {\bfseries}
  { }
  {  }
  {\thmname{ #1}\thmnumber{ #2}\thmnote{\mdseries \itshape \ #3}}
\theoremstyle{dotless}
\newtheorem*{thm}{Theorem}
\newtheorem*{cor}{Corollary}
\newtheorem*{lem}{Lemma}
\newtheorem*{defn}{Definition}
\begin{document}

\title{Essential Trigonometry Without Geometry}

\author{
John Gresham\\
Bryant Wyatt\\
Jesse Crawford\\
Tarleton State University}
\date{\today}

\begin{titlepage}
\maketitle
\thispagestyle{empty}
\setcounter{page}{0}
\end{titlepage}

\begin{abstract}
The development of the trigonometric functions in introductory texts usually follows geometric constructions using right triangles or the unit circle.  While these methods are satisfactory at the elementary level, advanced mathematics demands a more rigorous approach. Our purpose here is to revisit elementary trigonometry from an entirely analytic perspective.  We will give a comprehensive treatment of the sine and cosine functions and will show how to derive the familiar theorems of trigonometry without reference to geometric definitions or constructions.  
\end{abstract}

\bigskip

\subsection*{Introduction}
As we approach trigonometry from an analytic perspective, our understanding deepens and old theorems become new again.  For this study, we will assume a familiarity with calculus, differential equations, and real analysis. 
  
\subsection*{Definitions and Basic Properties}
We begin by considering the solution of the second-order homogeneous linear
differential equation%
\[
f\,^{\prime \prime }\left( x\right) +f\left( x\right) =0\text{ with }%
f\left( 0\right) =0\text{ and }f\,^{\prime }\left( 0\right) =1.
\]%
By the Existence and Uniqueness Theorem we know that a unique solution
exists [Nagle, Saff, and Snider, p. 171]. If this solution has a power series representation around the ordinary point $x=0$, it must have the form
\[
f\left( x\right) =\sum_{n=0}^{\infty }c_{n}x^{n}
\]%
Note that $f\left( 0\right) =c_{0}=0$ and $f\,^{\prime }\left( 0\right)
=c_{1}=1$. We also have%
\begin{align*}
f\,^{\prime \prime }\left( x\right)  &=\sum_{n=2}^{\infty }\left( n\right)
\left( n-1\right) c_{n}x^{n-2} \\
&=\sum_{n=0}^{\infty }\left( n+2\right) \left( n+1\right) c_{n+2}x^{n}
\end{align*}%
Then%
\[
\sum_{n=0}^{\infty }\left( n+2\right) \left( n+1\right)
c_{n+2}x^{n}+\sum_{n=0}^{\infty }c_{n}x^{n}=\sum_{n=0}^{\infty }\left(
\left( n+2\right) \left( n+1\right) c_{n+2}+c_{n}\right) x^{n}=0
\]%
Since this power series is $0$ for all $x$, we get the general recursion relation%
\[
\left( n+2\right) \left( n+1\right) c_{n+2}+c_{n}=0
\]%
so that 
\[
c_{n+2}=-\frac{c_{n}}{\left( n+2\right) \left( n+1\right) }.
\]%
Because $c_{0}=0$, we have for all even indices $2n$%
\[
c_{2n}=0
\]%
Let us now examine the coefficients with odd indices $2n+1$.
\begin{align*}
c_{1} &=1\text{ \ \ initial condition} \\
c_{3} &=-\frac{1}{3\cdot 2}=-\frac{1}{3!} \\
c_{5} &=-\genfrac{}{}{1pt}{0}{-\frac{1}{3!}}{5\cdot 4}=\frac{1}{5!} \\
c_{7} &=-\genfrac{}{}{1pt}{0}{\frac{1}{5!}}{7\cdot 6}=-\frac{1}{7!}
\end{align*}%
and in general,%
\[
c_{2n+1}=\left( -1\right) ^{n}\frac{1}{\left( 2n+1\right) !}
\]%
The power series about $x=0$ must have the form%
\[
\sum_{n=0}^{\infty }\left( -1\right) ^{n}\dfrac{x^{2n+1}}{\left( 2n+1\right)
!}
\]%
Using the Ratio Test, it is easy to show that this series converges for all
real $x$. \medskip 

The function represented by this power series is the unique solution of the
differential equation\medskip 
\[
f\,^{\prime \prime }\left( x\right) +f\left( x\right) =0\text{ with }%
f\left( 0\right) =0\text{ and }f\,^{\prime }\left( 0\right) =1.
\]%
\medskip We call this function the \textbf{sine }function\textbf{, }denoted $%
\sin x$, or $\sin \left( x\right) $.\textbf{\medskip }

\begin{defn}[Sine Function]
\[
\sin x=\sum_{n=0}^{\infty }\left( -1\right) ^{n}\dfrac{x^{2n+1}}{\left(
2n+1\right) !} 
\]
\end{defn}

We define the \textbf{cosine} to be the derivative of the sine function.

\begin{defn}[\textit{Cosine Function}]
\[
\cos x=\dfrac{d}{dx}\sum_{n=0}^{\infty }\left( -1\right) ^{n}\dfrac{x^{2n+1}%
}{\left( 2n+1\right) !}=\sum_{n=0}^{\infty }\left( -1\right) ^{n}\dfrac{%
x^{2n}}{\left( 2n\right) !}
\]

\end{defn}

The following are elementary consequences of the
definitions.\medskip

\begin{enumerate}
\item $\sin 0=0$

\item $\cos 0=1$

\item The function $\sin x$ is odd because all exponents in its power series
are odd.

\item The function $\cos x$ is even because all exponents in its power
series are even.

\item The functions $\sin x$ and $\cos x$ are both continuous since they are
differentiable.

\item The derivatives of $\sin x$ are cyclic with order four.

\end{enumerate}

\[
\begin{tabular}{|c|c|c|c|c|}
\hline
$f\left( x\right) $ & $f\,^{\prime }\left( x\right) $ & $f\,^{\prime \prime
}\left( x\right) $ & $f\,^{\prime \prime \prime }\left( x\right) $ & $%
f\,^{\prime \prime \prime \prime }\left( x\right) $ \\ \hline
$\sin x$ & $\cos x$ & $-\sin x$ & $-\cos x$ & $\sin x$ \\ \hline
\end{tabular}%
\]

\medskip

\subsection*{Key Theorems}

This section presents the Pythagorean and Sine Sum identities which, along with the smallest positive critical value of $\sin x$, enable the development of several important identities and analytic results in elementary trigonometry.
\bigskip 

First, we prove the Pythagorean Identity.  

\begin{thm}[Pythagorean Identity]

For all $x$,%
\[
\sin ^{2}x+\cos ^{2}x=1 
\]
\end{thm}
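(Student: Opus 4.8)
The plan is to exploit the fact that $\sin x$ and $\cos x$ arise from the differential equation $f'' + f = 0$, which makes the combination $\sin^2 x + \cos^2 x$ ideally suited to a derivative argument rather than any direct manipulation of the power series. First I would introduce the auxiliary function
\[
g(x) = \sin^2 x + \cos^2 x,
\]
and note that $g$ is differentiable on all of $\mathbb{R}$, since both $\sin x$ and $\cos x$ are differentiable (indeed infinitely so, being power series convergent for every real $x$).

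Next I would compute $g'(x)$ by the chain rule, using the derivative relations recorded in the table above: $\frac{d}{dx}\sin x = \cos x$ (the definition of cosine) and $\frac{d}{dx}\cos x = -\sin x$ (which follows from the defining differential equation, since $(\sin x)'' = -\sin x$). This gives
\[
g'(x) = 2\sin x \cos x + 2\cos x(-\sin x) = 0
\]
for every real $x$, the two terms canceling exactly.

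With $g' \equiv 0$ established, I would invoke the standard corollary of the Mean Value Theorem from real analysis: a function whose derivative vanishes on an interval is constant there. Hence $g$ is constant on $\mathbb{R}$. To pin down the constant, I would evaluate at the convenient point $x = 0$, where the elementary consequences already listed give $\sin 0 = 0$ and $\cos 0 = 1$, so $g(0) = 0 + 1 = 1$. It follows that $g(x) = 1$ for all $x$, which is exactly the asserted identity.

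I do not expect a genuine obstacle; the argument is short and rests entirely on facts already in hand. The only point meriting care is the appeal to the constant-function theorem, which should be applied over all of $\mathbb{R}$—immediate, since $\mathbb{R}$ is itself an interval. Everything else is routine differentiation and a single evaluation.
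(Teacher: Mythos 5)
Your proposal is correct and is essentially identical to the paper's own proof: differentiate $\sin^2 x + \cos^2 x$, observe the derivative vanishes identically, conclude the expression is constant, and evaluate at $x=0$. The only difference is that you make explicit the appeal to the Mean Value Theorem corollary, which the paper leaves implicit.
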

\begin{proof}[Proof:\nopunct] Consider the derivative of the left side.

\begin{align*}
\dfrac{d}{dx}\left( \sin ^{2}x+\cos ^{2}x\right) &=2\sin x\cos x+2\cos x\left( -\sin x\right)\\&=0
\end{align*}

\medskip Since the derivative is $0$, $\sin ^{2}x+\cos ^{2}x$ is a constant.

Because $\sin 0=0$, and $\cos 0=1$, this
constant must be $1$. 
\end{proof}

\bigskip

Next, we consider the identity for the sine of the sum of $x$ and $y$. The
proof in most elementary trigonometry texts involves a geometric
construction with triangles or the unit circle. In our geometry-free
approach, we will use only power series.\medskip

\begin{thm}[Sine Sum Identity]For all $x$, $y$, 
\[
\sin \left( x+y\right) =\sin x\cos y+\cos x\sin y 
\]
\end{thm}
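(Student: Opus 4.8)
The plan is to prove the identity directly from the series definitions, matching the coefficient of each monomial $x^{a}y^{b}$ on the two sides. Treating $x$ and $y$ as independent variables, both sides are two-variable power series that converge absolutely for all real $x,y$, so it suffices to check that corresponding coefficients agree. I would first expand the left-hand side by substituting $x+y$ into the sine series and applying the binomial theorem:
\[
\sin(x+y)=\sum_{n=0}^{\infty}\frac{(-1)^{n}}{(2n+1)!}\sum_{k=0}^{2n+1}\binom{2n+1}{k}x^{k}y^{2n+1-k}.
\]
Using the identity $\binom{2n+1}{k}/(2n+1)!=1/\bigl(k!\,(2n+1-k)!\bigr)$, the coefficient of a monomial $x^{a}y^{b}$ with $a+b=2n+1$ is exactly $(-1)^{n}/(a!\,b!)$, and only monomials of odd total degree $a+b$ appear.

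Next I would expand the right-hand side. Since $x$ and $y$ are independent variables, each of $\sin x\cos y$ and $\cos x\sin y$ is a product of two absolutely convergent one-variable series, hence a double sum over monomials $x^{a}y^{b}$. The product $\sin x\cos y$ produces precisely the monomials with $a$ odd and $b$ even, and a short computation with the signs (writing $a=2i+1$, $b=2j$, so $n=i+j$) shows its coefficient is $(-1)^{n}/(a!\,b!)$. Symmetrically, $\cos x\sin y$ produces precisely the monomials with $a$ even and $b$ odd, again with coefficient $(-1)^{n}/(a!\,b!)$.

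The final step is the matching. Because $a+b=2n+1$ is odd, exactly one of $a,b$ is odd, so every monomial appearing on the left is contributed by exactly one of the two terms on the right, and with an identical coefficient. Hence the two series have the same coefficients, and the identity follows.

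The main obstacle I anticipate is not the algebra but the rearrangement: regrouping the binomial double sum by individual monomials, and multiplying the one-variable series, both involve reordering infinitely many terms. I would justify this by invoking the absolute convergence of the sine and cosine series on all of $\mathbb{R}$ already established by the Ratio Test, which licenses both the reindexing and the Cauchy product; the sign-and-factorial bookkeeping is then routine once the constraint $a+b=2n+1$ fixes $n$. A slicker alternative, fixing $y$ and showing both sides satisfy $f\,^{\prime\prime}+f=0$ with matching values at $x=0$, would also close the argument via the uniqueness theorem cited earlier, but I keep to the series method promised in the text.
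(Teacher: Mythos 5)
Your proposal is correct and follows essentially the same route as the paper: expand $\sin(x+y)$ by the binomial theorem, expand $\sin x\cos y$ and $\cos x\sin y$ as Cauchy products justified by absolute convergence, and match the contributions according to the parity of the exponent of $x$. The only difference is that you organize the comparison by individual monomials $x^{a}y^{b}$ while the paper groups terms into the two sub-sums of its $n$th binomial term, which is a cosmetic rather than substantive distinction.
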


\begin{proof}[Proof:\nopunct]

Consider the series expansion%
\[
\sin \left( x+y\right) =\sum_{n=0}^{\infty }\left( -1\right) ^{n}\dfrac{%
\left( x+y\right) ^{2n+1}}{\left( 2n+1\right) !} 
\]%
Now examine the general $n^{th}$ term $a_{n}$ of this series using the Binomial
Theorem:
\begin{align*}
a_{n} 
&=\left( -1\right) ^{n}\dfrac{\left( x+y\right) ^{2n+1}}{\left( 2n+1\right) !}\\
&=\dfrac{\left( -1\right) ^{n}}{\left( 2n+1\right) !}\left( x+y\right)
^{2n+1} \\
&=\dfrac{\left( -1\right) ^{n}}{\left( 2n+1\right) !}\sum_{i=0}^{2n+1}%
\dbinom{2n+1}{i}x^{2n+1-i}y^{i} \\
&=\dfrac{\left( -1\right) ^{n}}{\left( 2n+1\right) !}\sum_{i=0}^{2n+1}%
\dfrac{\left( 2n+1\right) !}{i!\left( 2n+1-i\right) !}x^{2n+1-i}y^{i} \\
&=\left( -1\right) ^{n}\sum_{i=0}^{2n+1}\dfrac{1}{i!\left( 2n+1-i\right) !}%
x^{2n+1-i}y^{i}
\end{align*}%
\medskip This last sum has $2n+2$ terms. We will re-write it as two sums
each having $n+1$ terms.\medskip 

\begin{align*}
\left( -1\right) ^{n}\sum_{i=0}^{2n+1}\dfrac{1}{i!\left( 2n+1-i\right) !}%
x^{2n+1-i}y^{i} &=\left( -1\right) ^{n}\underbrace{\sum_{i=0}^{n}\dfrac{%
x^{2i+1}y^{2n-2i}}{\left( 2i+1\right) !\left( 2n-2i\right) !}}+\left(
-1\right) ^{n}\underbrace{\sum_{i=0}^{n}\dfrac{x^{2n-2i}y^{2i+1}}{\left(
2n-2i\right) !\left( 2i+1\right) !}} \\
&\text{\ \ \ \ \ \ \ \ increasing odd powers of }x\text{ \ \ \ \ \ \ \
decreasing\ even powers of }x \\
&=\dfrac{\left( -1\right) ^{n}}{\left( 2n+1\right) !}\sum_{i=0}^{n}\dfrac{%
\left( 2n+1\right) !x^{2i+1}y^{2n-2i}}{\left( 2i+1\right) !\left(
2n-2i\right) !}+\dfrac{\left( -1\right) ^{n}}{\left( 2n+1\right) !}%
\sum_{i=0}^{n}\dfrac{\left( 2n+1\right) !x^{2n-2i}y^{2i+1}}{\left(
2n-2i\right) !\left( 2i+1\right) !} \\
&=\underbrace{\dfrac{\left( -1\right) ^{n}}{\left( 2n+1\right) !}%
\sum_{i=0}^{n}\dbinom{2n+1}{2i+1}x^{2i+1}y^{2n-2i}}+\underbrace{\dfrac{%
\left( -1\right) ^{n}}{\left( 2n+1\right) !}\sum_{i=0}^{n}\dbinom{2n+1}{2i+1}%
x^{2n-2i}y^{2i+1}} \\
&\text{\ \ \ \ \ \ \ \ \ \ \ \ \ \ \ \ \ \ \ \ \ \ \ \ \ [1] \ \ \ \ \ \ \ \ \ \ \ \ \ \ \ \ \ \ \ \ \ \ \ \ \ \ \ \ \ \ \ \ \ \ \
\ \ \ \ \ \ \ \ \ \ [2]}
\end{align*}%
\medskip This last line represents the $n^{th}$ term of the expansion of $%
\sin \left( x+y\right) $. We now turn our attention to the right side 
\[
\sin x\cos y+\cos x\sin y 
\]%
and consider the series expansion of the term $\sin x\cos y$.\medskip

Since the series for $\sin x$ and for $\cos x$ both converge absolutely, we
can write $\sin x\cos y$ as the Cauchy product of the two series%
\[
\sin x\cos y=\sum_{n=0}^{\infty }c_{n} 
\]%
where 
\[
c_{n}=\sum_{i=0}^{n}a_{i}b_{n-i\,}\text{, \ }n=0,1,2,3,... 
\]%
and the $a_{i}$, $b_{n-i}$ terms come from the series for $\sin x$ and $\cos x$, respectively [Rudin, p. 63ff]. Let us examine the general term $c_{n}$ of
this Cauchy product.%
\begin{align*}
c_{n} &=\sum_{i=0}^{n}\left( -1\right) ^{i}\dfrac{x^{2i+1}}{\left(
2i+1\right) !}\cdot \left( -1\right) ^{n-i}\dfrac{y^{2n-2i}}{\left(
2n-2i\right) !} \\
&=\sum_{i=0}^{n}\left( -1\right) ^{n}\dfrac{x^{2i+1}y^{2n-2i}}{\left(
2i+1\right) !\left( 2n-2i\right) !} \\
&=\dfrac{\left( -1\right) ^{n}}{\left( 2n+1\right) !}\sum_{i=0}^{n}\dfrac{%
\left( 2n+1\right) !}{\left( 2i+1\right) !\left( 2n-2i\right) !}%
x^{2i+1}y^{2n-2i} \\
&=\dfrac{\left( -1\right) ^{n}}{\left( 2n+1\right) !}\sum_{i=0}^{n}\dbinom{%
2n+1}{2n-2i}x^{2i+1}y^{2n-2i}
\end{align*}%
Then the term $c_{n}$ is the odd powers of $x$ in part [1] of the
general binomial expansion above.

\medskip

By switching $x$ with $y$ in the previous equation, we get the general term $d_{n}$ for the Cauchy product of the series for $\sin y$ and $\cos x$.
\begin{align*}
d_{n} &=\dfrac{\left( -1\right) ^{n}}{\left( 2n+1\right) !}%
\sum_{i=0}^{n}\dbinom{2n+1}{2n-2i}y^{2i+1}x^{2n-2i} \\
&=\dfrac{\left( -1\right) ^{n}}{\left( 2n+1\right) !}\sum_{i=0}^{n}\dbinom{%
2n+1}{2n-2i}x^{2n-2i}y^{2i+1} \\
&=\dfrac{\left( -1\right) ^{n}}{\left( 2n+1\right) !}\sum_{i=0}^{n}\dbinom{%
2n+1}{2i+1}x^{2n-2i}y^{2i+1}
\end{align*}%
This matches the even powers of $x$ in part [2] of the general binomial
expansion.


Therefore\medskip
\[a_{n}=c_{n}+d_{n}\]
and 
\[
\sin \left( x+y\right) =\sin x\cos y+\cos x\sin y.
\]
\end{proof}
\medskip

We now turn our attention to a special value, the smallest positive critical value of $\sin x$, a number we will call $Q$.
\bigskip

\begin{thm}[Critical Value]
  There exists a smallest positive critical value of $\sin x$, that is, a smallest positive zero of $\cos x$.
\end{thm}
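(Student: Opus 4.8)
The plan is to split the argument into two parts: first establish that $\cos x$ has at least one positive zero, and then show that the set of such zeros has a least element.

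For the existence of a zero, I would argue by contradiction. Since $\cos 0 = 1 > 0$, the Intermediate Value Theorem (which applies because $\cos$ is continuous) will produce a zero in $(0,\infty)$ as soon as I exhibit a point where $\cos$ is negative. Suppose instead that $\cos x > 0$ for every $x > 0$. Then $\sin' x = \cos x > 0$, so $\sin$ is strictly increasing on $[0,\infty)$; combined with $\sin 0 = 0$ this forces $\sin x > 0$ for all $x > 0$. Fix any $a > 0$. For $x \ge a$ we then have $\cos' x = -\sin x \le -\sin a < 0$, and integrating from $a$ gives $\cos x \le \cos a - (\sin a)(x - a)$, whose right-hand side tends to $-\infty$ as $x \to \infty$. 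This contradicts $\cos x > 0$, so $\cos$ must take a negative value and hence have a positive zero. Alternatively, one can bound the alternating series for $\cos$ directly at a convenient point such as $x = 2$ to conclude $\cos 2 < 0$.

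For the smallest zero, let $Z = \{x > 0 : \cos x = 0\}$, which is nonempty by the previous step and bounded below by $0$, so $Q = \inf Z$ exists. I would then show $Q \in Z$ and $Q > 0$. Continuity of $\cos$ makes its zero set closed, so taking a sequence $x_n \in Z$ with $x_n \to Q$ yields $\cos Q = \lim_{n\to\infty} \cos x_n = 0$, placing $Q$ in the zero set. To see $Q > 0$, note that $\cos 0 = 1 \ne 0$ together with continuity gives a $\delta > 0$ on which $\cos$ stays positive, so no zeros lie in $[0,\delta)$ and therefore $Q \ge \delta > 0$. Thus $Q$ is the smallest positive zero of $\cos x$, equivalently the smallest positive critical value of $\sin x$.

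The main obstacle is the existence step: with no geometry available, the only tools are the defining series, the Pythagorean identity, and elementary calculus, so I must rule out the possibility that $\cos$ stays positive forever. The monotonicity-and-integration argument handles this cleanly, but it relies on chaining the sign information for $\sin$ and $\cos$ correctly; the direct series estimate is an equally valid alternative, though it requires care in verifying that the terms of the alternating series are eventually monotone decreasing so that the partial-sum bound is justified. Once a negative value is in hand, the infimum argument is routine.
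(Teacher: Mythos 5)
Your proof is correct, but your primary route to the existence of a zero differs from the paper's. The paper establishes existence by direct numerical estimation: it truncates the alternating series for $\cos 2$ after four terms, invokes the alternating-series remainder bound to get $\cos 2 \le -\tfrac{131}{315} < 0$, and applies the Intermediate Value Theorem on $(0,2)$; it then obtains the least zero by intersecting the (closed) zero set with $[0,2]$ and using compactness. You instead argue by contradiction from the differential-equation structure alone: if $\cos$ stayed positive, $\sin$ would be increasing and positive, forcing $\cos' \le -\sin a < 0$ beyond any fixed $a$, so $\cos$ would eventually become negative --- no series computation needed. Your second half (take $Q = \inf Z$, use closedness of the zero set and continuity at $0$ to get $Q \in Z$ and $Q > 0$) is the same idea as the paper's compactness step in different clothing, and is arguably cleaner since it does not require the small extra observation that the minimum over $Z \cap [0,2]$ is also the minimum over all of $Z$. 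What the paper's computation buys is the explicit bound $Q < 2$, which it uses both in the definition $Q = \min(\{x \mid \cos x = 0\} \cap [0,2])$ and later in the periodicity proof when it refers to $Q$ as ``the smallest positive number in $[0,2]$ with $\cos Q = 0$''; your argument yields no upper bound on $Q$, though none is strictly necessary if the later references are rephrased. Your parenthetical caution about checking that the terms of the alternating series at $x = 2$ are eventually decreasing is well taken and is indeed satisfied (the ratio of consecutive terms is $4/((2n+2)(2n+1)) < 1$ for $n \ge 1$), so both routes are sound.
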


\begin{proof}

We have already seen that $\cos 0=1$. Now observe that%
\[
\cos 2=1-\frac{2^{2}}{2!}+\frac{2^{4}}{4!}-\frac{2^{6}}{6!}+\cdots 
\]%
We now write 
\begin{align*}
\cos 2 &=\left( 1-\frac{2^{2}}{2!}+\frac{2^{4}}{4!}-\frac{2^{6}}{6!}\right)
+R_{3} \\
&=\left( -\frac{19}{45}\right) +R_{3} \\
&\leq -\frac{19}{45}+\left\vert R_{3}\right\vert
\end{align*}%
The Remainder Theorem for alternating series tells us that%
\begin{align*}
\left\vert R_{3}\right\vert &\leq a_{4}=\frac{2^{8}}{8!}\text{ \ and so} \\
\cos 2 &\leq -\frac{19}{45}+\frac{2}{315}=-\frac{131}{315}
\end{align*}%
\medskip Since $\cos 0>0$ and $\cos 2<0$, by the Intermediate Value Theorem,
there is at least one real number $c\in \left( 0,2\right) $ with $\cos c=0$.
The nonempty set $\left\{ x|\cos x=0\right\} $ is the inverse image of the
closed point set $\left\{ 0\right\}$ under the continuous function $\cos x$.
Therefore the set $\left\{ x|\cos x=0\right\} $ is closed. It follows that the set
\[
\left\{ x|\cos x=0\right\} \cap \left[ 0,2\right] 
\]%
is nonempty, closed, bounded, and is therefore compact [Willard, p. 120]. It
must contain its least element which we shall call, temporarily, $Q$.

\end{proof}

\medskip

\textbf{Definition of $Q$}%
\[
Q=\min \left( \left\{ x|\cos x=0\right\} \cap \left[ 0,2\right] \right) 
\]

\subsection*{Consequences of the Key Theorems}

The Pythagorean Identity leads directly to the following corollary.

\medskip

\begin{cor}For all $x$, 
\[
\left\vert \sin x\right\vert \leq 1\text{ \ and \ }\left\vert \cos
x\right\vert \leq 1. 
\]
\end{cor}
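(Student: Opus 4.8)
The plan is to invoke the Pythagorean Identity directly, since the corollary is an immediate consequence of it. The key observation is that for every real $x$, both $\sin x$ and $\cos x$ are real numbers (their power series have real coefficients and are evaluated at real $x$), and hence their squares are nonnegative. From the identity $\sin^2 x + \cos^2 x = 1$, neither summand can exceed $1$, because subtracting a nonnegative quantity from $1$ cannot produce a value larger than $1$.

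Concretely, I would rearrange the Pythagorean Identity to write
\[
\sin^2 x = 1 - \cos^2 x \leq 1,
\]
where the inequality uses $\cos^2 x \geq 0$, and symmetrically
\[
\cos^2 x = 1 - \sin^2 x \leq 1,
\]
using $\sin^2 x \geq 0$. Taking the nonnegative square root of each inequality and recalling that $\sqrt{t^2} = |t|$ for real $t$, I would then conclude $|\sin x| \leq 1$ and $|\cos x| \leq 1$, as desired.

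There is essentially no obstacle here, so the difficulty is merely in being honest about which background facts the square-root step uses: namely the monotonicity of $\sqrt{\,\cdot\,}$ on $[0,\infty)$ and the identity $\sqrt{t^2}=|t|$. Both are standard results from real analysis, which the introduction has already assumed, so no further justification is needed. The entire argument rests on the previously established Pythagorean Identity together with the nonnegativity of squares of real numbers.
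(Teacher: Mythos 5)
Your proof is correct and uses the same essential ingredients as the paper's: the Pythagorean Identity together with the nonnegativity of squares of real numbers. The only difference is presentational — you argue directly that $\sin^2 x = 1-\cos^2 x \leq 1$ and take square roots, whereas the paper phrases the same observation contrapositively (if $|\sin x|>1$ then $\cos^2 x<0$, impossible for real $\cos x$); your version is, if anything, slightly more carefully justified.
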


\begin{proof}[Proof:\nopunct]

If $\left\vert \sin x\right\vert > 1$, then $\cos^{2} x < 0$ and $\cos x$ is not a real number. Similarly, if $\left\vert \cos x\right\vert > 1$, then $\sin x$ is not a real number.  In this study, we are restricting our work to real numbers.

\end{proof}

\medskip

The next two corollaries follow from the Pythagorean Identity and the special properties of $Q$.

\pagebreak

\begin{cor} $\sin Q=1$ and $\sin x$ has an absolute maximum value of $1$ at $%
x=Q$.
\end{cor}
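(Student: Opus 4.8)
The plan is to combine the Pythagorean Identity with the defining minimality property of $Q$. Since $\cos Q = 0$, substituting into $\sin^2 Q + \cos^2 Q = 1$ immediately gives $\sin^2 Q = 1$, so $\sin Q = \pm 1$. The entire content of the proof is then to rule out the value $-1$, that is, to determine the correct sign.

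To pin down the sign, I would first argue that $\cos x > 0$ throughout the half-open interval $[0, Q)$. Indeed, $\cos 0 = 1 > 0$, and if $\cos$ took a nonpositive value at some point of $[0, Q)$, then by the Intermediate Value Theorem (using $\cos 0 > 0$) the function $\cos$ would have a zero strictly less than $Q$ inside $[0,2]$, contradicting the definition of $Q$ as the least element of $\left\{ x \mid \cos x = 0 \right\} \cap [0,2]$. Hence $\cos x > 0$ on $[0,Q)$. Because $\cos x = \frac{d}{dx}\sin x$, this says that $\sin x$ has strictly positive derivative on $[0,Q)$ and is therefore strictly increasing on $[0,Q]$. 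Together with $\sin 0 = 0$ this forces $\sin Q > 0$, so of the two candidates $\pm 1$ we must have $\sin Q = 1$.

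For the claim that $1$ is an absolute maximum attained at $x = Q$, I would simply invoke the preceding corollary, which gives $\left\vert \sin x \right\vert \le 1$, and hence $\sin x \le 1$, for all real $x$. Since $\sin Q = 1$, this universal upper bound is achieved at $x = Q$, so $1$ is the absolute maximum value of $\sin x$ and it occurs at $Q$.

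The main obstacle is the sign determination in the first step. Once the positivity of $\cos$ on $[0,Q)$ has been extracted from the minimality of $Q$ and converted into monotonicity of $\sin$ via $\sin' = \cos$, the remaining assertions are immediate consequences of the Pythagorean Identity and the bound $\left\vert \sin x \right\vert \le 1$.
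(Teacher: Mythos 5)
Your proposal is correct and follows essentially the same route as the paper: both use the minimality of $Q$ to get $\cos x>0$ before $Q$, conclude that $\sin$ is strictly increasing and hence $\sin Q>0$, combine this with the Pythagorean Identity to get $\sin Q=1$, and then cite the bound $\left\vert \sin x\right\vert \leq 1$ for the maximum claim. Your version is in fact slightly more careful than the paper's, since you make the Intermediate Value Theorem step explicit and address the passage from strict increase on the open interval to the endpoint $Q$.
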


\begin{proof}[Proof:\nopunct] Since $\cos 0=1$ and $\cos x$ is an even function, for $x\in ( -Q,Q)$, we have $\cos x>0$. Therefore $\sin x$ is strictly increasing on $(-Q,Q)$. Since $0<Q$ we have $0=\sin 0<\sin Q$.  From the Pythagorean Identity we know that%
\[
\sin ^{2}Q+\cos ^{2}Q=1 
\]%
Since $\cos Q=0$, it must be the case that $\sin Q=1$. We have already
observed that%
\[
\left\vert \sin x\right\vert \leq 1 
\]%
and therefore $1$ is an absolute maximum of $\sin x$.
\end{proof}
\medskip

\begin{cor} The range of $\sin x$ is $\left[ -1,1\right] $.
 
\end{cor}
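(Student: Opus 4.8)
The plan is to prove the two set inclusions separately. The containment of the range within $[-1,1]$ is already in hand: the corollary asserting $\left\vert \sin x\right\vert \leq 1$ for all $x$ says precisely that every value $\sin x$ lies in $[-1,1]$. So the substance of the theorem is the reverse inclusion, namely that every number in $[-1,1]$ is actually attained as a value of $\sin x$.

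For the reverse inclusion, I would lean on continuity together with the two anchor values $\sin 0 = 0$ and $\sin Q = 1$. Since $\sin x$ is continuous, the Intermediate Value Theorem applied on the interval $[0,Q]$ guarantees that $\sin x$ takes every value between $\sin 0 = 0$ and $\sin Q = 1$; that is, $[0,1]$ lies in the range. To capture the negative values, I would invoke the oddness of $\sin x$: if $y \in [-1,0]$, then $-y \in [0,1]$ is attained at some $x_{0} \in [0,Q]$, whence $\sin(-x_{0}) = -\sin x_{0} = y$. Thus $[-1,0]$ also lies in the range, and combining the two halves gives $[-1,1] \subseteq \text{range}(\sin)$.

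Putting both inclusions together yields equality. I do not anticipate a serious obstacle here, since all the hard analytic work (the bound $\left\vert \sin x\right\vert \leq 1$, the value $\sin Q = 1$, continuity, and oddness) has already been established. The only care needed is to apply the Intermediate Value Theorem on a closed interval whose endpoints realize the extreme values $0$ and $1$, and then to transfer the conclusion to negative arguments cleanly via the odd symmetry.
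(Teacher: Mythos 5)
Your proposal is correct and follows essentially the same route as the paper: both arguments rest on the bound $\left\vert \sin x\right\vert \leq 1$, the value $\sin Q=1$, continuity, the Intermediate Value Theorem, and the oddness of $\sin x$. The only cosmetic difference is that the paper first notes $\sin(-Q)=-\sin Q=-1$ and applies the Intermediate Value Theorem once on $[-Q,Q]$, whereas you apply it on $[0,Q]$ and then reflect by oddness; the content is identical.
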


\begin{proof}[Proof:\nopunct] Because $\sin x$ is an odd function we have $\sin \left(
-Q\right) =-\sin Q=-1$ is an absolute minimum. The range $\left[ -1,1%
\right] $ follows from the continuity of $\sin x$ and the Intermediate Value
Theorem.\end{proof}
Later will will see that the range of cosine is also $[-1,1]$.

\bigskip

Our next two corollaries follow from the Sine Sum Theorem.

\bigskip

\begin{cor} $\sin \left( x-y\right) =\sin x\cos y-\cos x\sin y$
\end{cor}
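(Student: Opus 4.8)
The plan is to obtain the sine difference identity as an immediate consequence of the Sine Sum Identity, using only the parity properties of the sine and cosine functions recorded among the elementary consequences above. Because the Sine Sum Identity was established for all real numbers, I am free to replace $y$ with $-y$ throughout it.

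First I would substitute $-y$ for $y$ in the Sine Sum Identity, obtaining
\[
\sin\left(x+(-y)\right)=\sin x\cos(-y)+\cos x\sin(-y).
\]
The argument on the left simplifies to $x-y$. To handle the right side, I would invoke the fact that $\cos x$ is even, so that $\cos(-y)=\cos y$, together with the fact that $\sin x$ is odd, so that $\sin(-y)=-\sin y$. Making these two replacements yields
\[
\sin(x-y)=\sin x\cos y-\cos x\sin y,
\]
which is exactly the claimed identity.

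There is no substantial obstacle in this argument: it is a single substitution combined with the two parity facts. The only point meriting any care is to confirm that the Sine Sum Identity was proved for \emph{all} real $x$ and $y$ — which it was — so that the replacement $y\mapsto -y$ is legitimate and no restriction on the domain is introduced. No convergence arguments, series manipulations, or further analytic considerations are required; everything follows from results already in hand.
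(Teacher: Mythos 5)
Your proposal is correct and follows exactly the paper's own argument: substitute $-y$ into the Sine Sum Identity and simplify using the oddness of $\sin x$ and the evenness of $\cos x$. Nothing further is needed.
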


\begin{proof}[Proof:\nopunct]

Because $\sin x$ is an odd function and $\cos x$ is even, we have the following:

\begin{align*}
\sin \left( x-y\right) &=\sin \left( x+\left( -y\right) \right) \\
&=\sin x\cos \left( -y\right) +\cos x\sin \left( -y\right) \\
&=\sin x\cos y-\cos x\sin y
\end{align*}%
\end{proof}

\medskip

\begin{cor}$
\sin 2x=2\sin x\cos x$
\end{cor}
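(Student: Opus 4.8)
The plan is to recognize $\sin 2x$ as a special case of the Sine Sum Identity rather than proving anything new. Since $2x = x + x$, I would write $\sin 2x = \sin(x + x)$ and then invoke the Sine Sum Theorem proved above, which is valid for all real $x$ and $y$. Setting $y = x$ in that identity is the entire substance of the argument.

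Concretely, I would carry out the following single computation:
\[
\sin 2x = \sin(x + x) = \sin x \cos x + \cos x \sin x = 2 \sin x \cos x.
\]
The first equality is just the definition $2x = x+x$, the second is a direct application of the Sine Sum Identity with $y$ replaced by $x$, and the third collects the two identical terms. No appeal to the Pythagorean Identity, the power series, or the special value $Q$ is needed here.

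I do not expect any genuine obstacle in this proof. The only thing to be careful about is that the Sine Sum Identity has already been established for \emph{all} real arguments, so the specialization $y = x$ is immediately legitimate; there are no convergence or domain issues to check. In keeping with the pattern of the preceding corollaries, which each reduce to one or two lines once the Key Theorems are in hand, this result is best presented as a short, direct consequence of the Sine Sum Theorem.
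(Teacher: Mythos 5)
Your proof is correct and is exactly the paper's own argument: write $\sin 2x = \sin(x+x)$, apply the Sine Sum Identity with $y = x$, and combine the two identical terms. Nothing further is needed.
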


\begin{proof}[Proof:\nopunct]

\begin{align*}
\sin 2x &=\sin \left( x+x\right) \\
&=\sin x\cos x+\cos x\sin x \\
&=2\sin x\cos x
\end{align*}

\end{proof}

We now consider the cofunction rules that follow from the Sine Sum Identity and the properties of $Q$. We will use these later to show that the sine and cosine functions are periodic.\bigskip

\begin{cor}[Cofunction Rule] $\sin \left( Q-x\right)
=\cos x$
\end{cor}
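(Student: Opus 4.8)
The plan is to apply the sine difference identity (the earlier corollary giving $\sin(x-y) = \sin x \cos y - \cos x \sin y$) directly, treating $Q - x$ as the difference of the fixed value $Q$ and the variable $x$. By that corollary,
\[
\sin(Q - x) = \sin Q \cos x - \cos Q \sin x.
\]
All of the work then reduces to substituting the two known values of sine and cosine at $Q$.

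First I would recall that $\cos Q = 0$ holds by the very definition of $Q$ as a zero of the cosine function. Next I would invoke the corollary establishing that $\sin Q = 1$. Substituting these into the expression above annihilates the second term and reduces the coefficient of $\cos x$ to one, leaving $\sin(Q - x) = \cos x$ as desired.

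The hard part is essentially absent from this proof, since the computation is a single substitution; the substantive content lies entirely in the facts already secured, namely that $Q$ is a genuine zero of cosine and that $\sin Q$ attains the value $1$ there. Once those are in hand, the cofunction rule is immediate, and no further analytic machinery—power series, Cauchy products, or compactness—is required. If anything, the only point demanding care is ensuring that the difference identity is applied with $y = x$ the variable and $Q$ held fixed, so that the coefficients $\sin Q$ and $\cos Q$ are the constants being substituted rather than functions of the argument.
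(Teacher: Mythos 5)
Your proposal is correct and follows exactly the paper's own argument: apply the sine difference identity to $\sin(Q-x)$ and substitute $\sin Q = 1$ (from the earlier corollary) and $\cos Q = 0$ (from the definition of $Q$). Nothing is missing.
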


\begin{proof}[Proof:\nopunct]
\begin{align*}
\sin \left( Q-x\right) &=\sin Q\cos x-\cos Q\sin x \\
&=1\cdot \cos x-0\cdot \sin x \\
&=\cos x
\end{align*}%
\end{proof} 

\begin{cor}[Cofunction Rule]$\cos \left( Q-x\right) =\sin x$
\end{cor}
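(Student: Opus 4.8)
The plan is to obtain this identity immediately from the preceding Cofunction Rule, $\sin(Q - x) = \cos x$, which holds for every real $x$. Since that result is an equality of functions valid for all arguments, I am free to replace the variable $x$ by the expression $Q - x$ throughout, and the desired identity should fall out after simplifying the nested argument.

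First I would carry out that substitution. On the left the argument collapses, since $\sin(Q - (Q - x)) = \sin x$, while the right side becomes $\cos(Q - x)$. Equating the two resulting expressions gives $\cos(Q - x) = \sin x$, which is exactly the statement to be proved. No appeal to the Pythagorean Identity or to the values $\sin Q = 1$ and $\cos Q = 0$ is needed beyond what was already used to establish the previous corollary.

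I expect no real obstacle here; the single point deserving a moment's care is verifying that the substitution is legitimate for all $x$, and this is guaranteed precisely because the earlier Cofunction Rule was proved as an identity in the free variable rather than merely at an isolated point. Thus substituting any real expression, in particular $Q - x$, preserves the equality.

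Should a more computational proof be preferred, I could instead differentiate $\sin(Q - x) = \cos x$ with respect to $x$: the chain rule sends the left side to $-\cos(Q - x)$ and the right side to $-\sin x$, and cancelling the common factor of $-1$ again yields $\cos(Q - x) = \sin x$. Either route is short, so the choice between them is purely a matter of exposition, and I would lead with the substitution argument for its brevity.
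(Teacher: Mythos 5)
Your substitution argument is exactly the paper's proof: the paper writes $\cos(Q-x)=\sin(Q-(Q-x))=\sin x$, which is precisely the instance $x\mapsto Q-x$ of the preceding Cofunction Rule. The proposal is correct and takes the same route (the differentiation alternative you mention is also valid but unnecessary).
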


\begin{proof}[Proof:\nopunct]
\begin{align*}
\cos \left( Q-x\right) &=\sin \left( Q-\left( Q-x\right) \right) \\
&=\sin x
\end{align*}%
\end{proof} 

In the following corollaries we complete the sum, difference, and double angle rules.

\begin{cor} \ $\cos \left( x+y\right) =\cos x\cos y-\sin x\sin y$
\end{cor}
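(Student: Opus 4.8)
The plan is to avoid a fresh power-series computation entirely and instead reduce the cosine sum to a sine difference by means of the two cofunction rules just established. The key observation is that the first cofunction rule, $\sin(Q-x)=\cos x$, lets me rewrite the left-hand side $\cos(x+y)$ as $\sin(Q-(x+y))$, trading a cosine for a sine at the cost of shifting the argument by $Q$.

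Next I would regroup the argument algebraically, writing $Q-(x+y)=(Q-x)-y$, so that the expression becomes $\sin((Q-x)-y)$. This is now a sine of a difference, so I can apply the Sine Difference corollary $\sin(u-v)=\sin u\cos v-\cos u\sin v$ with $u=Q-x$ and $v=y$, obtaining
\[
\sin(Q-x)\cos y-\cos(Q-x)\sin y.
\]

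Finally I would apply both cofunction rules to the remaining shifted terms, namely $\sin(Q-x)=\cos x$ and $\cos(Q-x)=\sin x$. Substituting these in yields $\cos x\cos y-\sin x\sin y$, which is exactly the desired right-hand side.

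I expect no serious obstacle here, since every step is a direct invocation of an identity already proved; the only point requiring care is the bookkeeping of the argument shift, ensuring the regrouping $Q-(x+y)=(Q-x)-y$ is performed before the sine difference rule is applied and that both cofunction substitutions are made in the correct direction. A purely series-based proof via Cauchy products is also available, mirroring the Sine Sum argument, but it would be considerably more laborious, and the cofunction route is both shorter and consonant with the tools this section has deliberately assembled.
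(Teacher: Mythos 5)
Your proposal is correct and coincides exactly with the paper's own proof: both rewrite $\cos(x+y)$ as $\sin(Q-(x+y))=\sin((Q-x)-y)$ via the cofunction rule, apply the sine difference identity, and then use both cofunction rules to recover $\cos x\cos y-\sin x\sin y$. No gaps; nothing further to add.
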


\begin{proof}[Proof:\nopunct] 
\begin{align*}
\cos \left( x+y\right) &=\sin \left( Q-\left( x+y\right) \right) \\
&=\sin \left( \left( Q-x\right) -y\right) \\
&=\sin \left( Q-x\right) \cos y-\cos \left( Q-x\right) \sin y \\
&=\cos x\cos y-\sin x\sin y
\end{align*}%
\end{proof} 
\medskip 

The following corollaries now follow.\bigskip

\begin{cor} \ $\cos \left( x-y\right) =\cos x\cos y+\sin x\sin y$
\end{cor}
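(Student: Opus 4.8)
The plan is to reuse the cosine sum identity just established, exactly as the sine difference corollary reused the sine sum identity. First I would rewrite the difference as a sum, $\cos\left( x-y\right) =\cos\left( x+\left( -y\right) \right) $, so that the previously proven formula $\cos\left( x+y\right) =\cos x\cos y-\sin x\sin y$ applies directly with $y$ replaced by $-y$. This yields $\cos\left( x-y\right) =\cos x\cos\left( -y\right) -\sin x\sin\left( -y\right) $.

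Next I would invoke the parity facts listed among the elementary consequences of the definitions: $\cos x$ is even, so $\cos\left( -y\right) =\cos y$, and $\sin x$ is odd, so $\sin\left( -y\right) =-\sin y$. Substituting these in, the even cosine factor is unchanged while the odd sine factor contributes a sign flip, so that $-\sin x\sin\left( -y\right) =-\sin x\left( -\sin y\right) =+\sin x\sin y$, and the identity $\cos\left( x-y\right) =\cos x\cos y+\sin x\sin y$ follows.

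There is no genuine obstacle here; the computation is a single substitution followed by a sign simplification, and every ingredient---the cosine sum identity and the even/odd character of cosine and sine---is already in hand. The only point requiring the slightest care is bookkeeping on the two negative signs, making sure the even cosine term retains its sign while the odd sine term flips, so that the subtraction in the sum formula becomes the addition that characterizes the difference formula.
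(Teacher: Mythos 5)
Your proposal is correct and matches the paper's own proof exactly: both rewrite $\cos\left( x-y\right)$ as $\cos\left( x+\left( -y\right)\right)$, apply the cosine sum identity, and simplify using the evenness of cosine and the oddness of sine. No differences worth noting.
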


\begin{proof}[Proof:\nopunct] 
\begin{align*}
\cos \left( x-y\right) &=\cos(x+(-y)) \\
&=\cos x \cos(-y)- \sin x \sin(-y)  \\
&=\cos x \cos y + \sin x \sin y
\end{align*}%
\end{proof}

\medskip

\begin{cor} \ $\cos 2x =2\cos^{2} x-1$
\end{cor}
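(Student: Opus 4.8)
The plan is to mirror the derivation of the double angle rule for sine, specializing the Cosine Sum Identity from the immediately preceding corollary to the case $y=x$. Setting $y=x$ in $\cos\left(x+y\right)=\cos x\cos y-\sin x\sin y$ collapses the right side, so that $\cos 2x=\cos\left(x+x\right)=\cos^{2}x-\sin^{2}x$. This is already a valid double angle identity, but it is expressed in terms of both functions.

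To reach the desired form, which involves only cosine, I would next invoke the Pythagorean Identity to rewrite $\sin^{2}x$ as $1-\cos^{2}x$. Substituting this into the expression above and collecting terms gives $\cos 2x=\cos^{2}x-\left(1-\cos^{2}x\right)=2\cos^{2}x-1$, as claimed.

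There is no genuine obstacle in this argument: both ingredients—the Cosine Sum Identity and the Pythagorean Identity—have already been established, and the proof is a routine two-step substitution. The only real decision is which of the equivalent forms of the double angle rule to record. Eliminating $\sin^{2}x$ rather than $\cos^{2}x$ yields the stated cosine-only version; had I instead substituted $\cos^{2}x=1-\sin^{2}x$, I would have obtained the companion form $\cos 2x=1-2\sin^{2}x$, so the choice of which identity to substitute is what fixes the final shape of the result.
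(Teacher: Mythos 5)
Your proof is correct and is essentially identical to the paper's: both set $y=x$ in the Cosine Sum Identity to obtain $\cos 2x=\cos^{2}x-\sin^{2}x$ and then eliminate $\sin^{2}x$ via the Pythagorean Identity. Nothing further is needed.
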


\begin{proof}[Proof:\nopunct]

\begin{align*}
\cos 2x &=\cos \left( x+x\right) \\
&=\cos x\cos x-\sin x\sin x \\
&=\cos^{2}x-\sin^{2}x\\
&=\cos^{2}x-(1-\cos^{2}x)\\
&=2\cos^{2} x-1
\end{align*}

\end{proof}

\bigskip

We have seen that the three key theorems have led to the familiar difference formulas as well as double angle formulas.  From these follow the other identities such as half-angle and product-to-sum rules.  In particular, we will later need the
identity%
\[
\cos ^{2}x=\frac{1}{2}+\frac{1}{2}\cos 2x 
\]
\medskip

\subsection*{Periodicity}

We will need the sine and cosine function values of $4Q$ to show periodicity.  Here is a sequence of steps to arrive at this point. 

\medskip

\begin{enumerate}
\item $\sin 2Q=2\sin Q\cos Q=2(1)(0)=0$

\item $\cos 2Q=\sin \left( Q-2Q\right) =\sin \left( -Q\right) =-\sin Q=-1$.\\
From this it follows that the range of $\cos x$ is $[-1,1].$

\item $\sin 3Q=\sin \left( Q+2Q\right) =\sin Q\cos 2Q+\cos Q\sin 2Q=-1$

\item $\cos 3Q=\sin \left( Q-3Q\right) =\sin \left( -2Q\right) =-\sin 2Q=0$

\item $\sin 4Q=2\sin 2Q\cos 2Q=0$

\item $\cos 4Q=\sin \left( Q-4Q\right) =\sin \left( -3Q\right)=-\sin\left(3Q\right)=-(-1) =1\medskip $
\end{enumerate}

We now have the machinery needed to prove the periodicity of $\sin x$ and $%
\cos x$. \medskip

\begin{defn}
A function $f\left( x\right) $ is \textit{periodic}
if there is a positive number $p$ such that%
\[
f\left( x+p\right) =f\left( x\right) 
\]%
for all $x$. If there is a \textsl{smallest} positive number $p$ for which
this holds, then $p$ is called the \textit{period of }$f$.
\end{defn}

\begin{thm}[Periodicity of Sine]  The sine function is periodic and its period is $4Q$.
\end{thm}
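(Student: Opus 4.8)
The plan is to prove two things in sequence: first that $4Q$ is a period, i.e.\ that $\sin(x+4Q)=\sin x$ for every $x$, and then that no smaller positive number is a period. The first part is immediate from the Sine Sum Identity together with the already-computed values $\sin 4Q=0$ and $\cos 4Q=1$: expanding gives $\sin(x+4Q)=\sin x\cos 4Q+\cos x\sin 4Q=\sin x$. So $4Q$ is certainly a period, and the period, if it is the least such number, is at most $4Q$.

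For minimality I would suppose toward a contradiction that some $p$ with $0<p<4Q$ satisfies $\sin(x+p)=\sin x$ for all $x$. Evaluating at $x=0$ forces $\sin p=0$, so the crux is to locate every zero of $\sin$ in the open interval $(0,4Q)$. I expect this sign analysis to be the main obstacle, and I would handle it by using the Sine Sum Identity to shift by multiples of $Q$. On $(0,Q)$ the sine is strictly increasing from $\sin 0=0$ to $\sin Q=1$, hence positive. Writing a point of $(Q,2Q)$ as $Q+t$ with $t\in(0,Q)$ gives $\sin(Q+t)=\sin Q\cos t+\cos Q\sin t=\cos t>0$, since $\cos$ is positive on $(-Q,Q)$; thus $\sin>0$ on all of $(0,2Q)$, vanishing only at the endpoints. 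Similarly, writing a point of $(2Q,4Q)$ as $2Q+t$ with $t\in(0,2Q)$ gives $\sin(2Q+t)=\sin 2Q\cos t+\cos 2Q\sin t=-\sin t<0$, since $\sin t>0$ on that range. Hence $\sin<0$ throughout $(2Q,4Q)$.

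It follows that the only zero of $\sin$ in $(0,4Q)$ is $2Q$, so the assumed period $p$ must equal $2Q$. But $\sin(x+2Q)=\sin x\cos 2Q+\cos x\sin 2Q=-\sin x$, which fails to equal $\sin x$ for all $x$ (for instance at $x=Q$, where it yields $-1\neq 1$). This contradiction shows that no period lies in $(0,4Q)$. Combined with the first part, $4Q$ is the smallest positive number $p$ with $\sin(x+p)=\sin x$, so the sine function is periodic with period exactly $4Q$.
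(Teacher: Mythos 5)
Your proof is correct, and it takes a genuinely different route to minimality than the paper does. The paper posits a smaller period $4R$ with $0<R<Q$, evaluates the periodicity relation at $x=Q$ to obtain $\cos 4R=1$, and then applies the double-angle identity twice to force either $\sin R=0$ or $\cos R=0$, each contradicting the established sign facts on $(0,Q)$. You instead evaluate at $x=0$ to get $\sin p=0$ and then determine the complete sign chart of $\sin$ on $(0,4Q)$ by shifting with the Sine Sum Identity: positive on $(0,2Q)$, negative on $(2Q,4Q)$, so $2Q$ is the only interior zero; the lone candidate $p=2Q$ is killed by $\sin(x+2Q)=-\sin x$. Each step of your sign analysis leans only on facts already available at this point in the paper ($\cos>0$ on $(-Q,Q)$, $\sin Q=1$, $\sin 2Q=0$, $\cos 2Q=-1$), so the argument is sound. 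Your version is somewhat longer but yields more as a byproduct — the full sign behavior of $\sin$ over one period and the exact zero set on $[0,4Q)$ — which the paper only recovers later from its table of values; the paper's version is a shorter, purely algebraic manipulation with the double-angle formula that needs no sign analysis beyond the interval $(0,Q)$.
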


\begin{proof}[Proof:\nopunct]  We first show that sine is periodic.
\begin{align*}
\sin \left( x+4Q\right) &=\sin x\cos 4Q+\cos x\sin 4Q \\
&=\sin x\left( 1\right) +\cos x\left( 0\right) \\
&=\sin x
\end{align*}%

This shows that $\sin x$ is periodic, but does not show that the
period is $4Q$. To show that $4Q$ is the period, assume, to the
contrary, that there exists a number $R$ such that $0<4R<4Q$ and for all $x$,%
\[
\sin \left( x+4R\right) =\sin x 
\]%
Observe that $0<R<Q$. For $x\in \left( 0,Q\right) $ we have $\cos x>0$ because
$\cos 0=1$ and $Q$ is the smallest value with $\cos Q=0$. We also have $%
\sin x>0$ since $\sin 0=0$ and $\sin $ is increasing on $\left( 0,Q\right)$. 
Now examine $\sin Q$:
\begin{align*}
\sin Q &=\sin \left( Q+4R\right) \\
&=\sin Q\cos 4R+\cos Q\sin 4R \\
&=\cos 4R \\
&=\cos 2\left( 2R\right) \\
&=2\cos ^{2}\left( 2R\right) -1
\end{align*}

Because $\sin Q=1$,
\begin{align*}
1 &=2\cos ^{2}\left( 2R\right) -1 \\
1 &=\cos ^{2}\left( 2R\right) \\
\cos 2R &=1\text{ or }\cos 2R=-1
\end{align*}%
We now have two cases:\\
\pagebreak

\ Case I: \ $\cos 2R=1.$

\ \ \ \ \ \ \ \ \ \ \ \ \ \ Then by the double angle identity, 
\begin{align*}
2\cos ^{2}R-1 &=1 \\
\cos ^{2}R &=1
\end{align*}

If $\cos^{2} R=1$, then by the Pythagorean Identity, $\sin R=0$, a contradiction
to the fact that $\sin R>0$.

\medskip 

\ Case II: \ $\cos 2R=-1$.

\qquad Then%
\begin{align*}
2\cos ^{2}R-1 &=-1 \\
\cos R &=0
\end{align*}%
This last statement contradicts the choice of $Q$ as the smallest positive
number in $\left[ 0,2\right] $ with $\cos Q=0$. Therefore such a number $R$ does not exist, and the period of $\sin $  is $4Q$.%
\end{proof} 
\bigskip

\begin{cor}[Periodicity of Cosine]The cosine function is periodic with period $4Q$.
\end{cor}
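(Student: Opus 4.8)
The plan is to prove the two halves of the periodicity claim separately: first that $4Q$ is \emph{a} period of $\cos x$, and then that it is the \emph{smallest} positive one, as the definition of ``period'' requires. The first half should be immediate. I would write $\cos\left(x+4Q\right)=\cos x\cos 4Q-\sin x\sin 4Q$ using the cosine sum formula already established, and then substitute the values $\cos 4Q=1$ and $\sin 4Q=0$ computed in the preceding list to obtain $\cos\left(x+4Q\right)=\cos x$ for all $x$. Equivalently, since $\cos x$ is the derivative of $\sin x$, one may simply differentiate the identity $\sin\left(x+4Q\right)=\sin x$ from the previous theorem to reach the same conclusion.

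For the minimality I would argue by contradiction and transfer periodicity from cosine back to sine using the two cofunction rules. Suppose $p$ is a period of $\cos x$ with $0<p<4Q$. The key computation invokes $\cos\left(Q-u\right)=\sin u$ and $\sin\left(Q-x\right)=\cos x$: for every $x$,
\[
\sin\left(x+p\right)=\cos\bigl(Q-\left(x+p\right)\bigr)=\cos\bigl(\left(Q-x\right)-p\bigr)=\cos\left(Q-x\right)=\sin x,
\]
where the middle equality is exactly the assumed $p$-periodicity of cosine, applied to the argument $\left(Q-x\right)-p$. This exhibits $p$ as a period of $\sin x$ with $0<p<4Q$, contradicting the Periodicity of Sine theorem, which establishes that $4Q$ is the smallest positive period of the sine function. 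Hence no such $p$ exists and $4Q$ is the period of $\cos x$.

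The only genuine obstacle is the minimality step, and even there the difficulty is purely organizational: one must be careful that the $p$-periodicity of cosine may be applied to \emph{any} real argument (here $\left(Q-x\right)-p$), so that subtracting $p$ inside the cosine leaves the value unchanged. Once this cofunction bridge is set up correctly, the contradiction with the already-proven sine period is automatic, and no new estimates, series manipulations, or properties of $Q$ beyond those used for sine are required.
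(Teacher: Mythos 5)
Your proof is correct, but it takes a genuinely different route from the paper's. The paper simply writes $\cos x=-\sin\left(x-Q\right)$ and appeals to the general principle that a horizontal translation and a reflection about the $x$-axis do not change the period of a function, so cosine inherits the period $4Q$ from sine; that principle is true but is left unproved, so the paper's argument is short at the cost of leaning on an informal lemma. You instead establish that $4Q$ is a period directly from the cosine sum formula together with $\cos 4Q=1$ and $\sin 4Q=0$ (or by differentiating $\sin\left(x+4Q\right)=\sin x$), and you prove minimality by the cofunction bridge: any period $p$ of cosine with $0<p<4Q$ would, via $\sin\left(x+p\right)=\cos\left(Q-\left(x+p\right)\right)=\cos\left(\left(Q-x\right)-p\right)=\cos\left(Q-x\right)=\sin x$, be a period of sine smaller than $4Q$, contradicting the Periodicity of Sine theorem. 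Your chain of equalities is valid, including the careful point that $p$-periodicity of cosine applies at the argument $\left(Q-x\right)-p$. What your approach buys is a fully explicit argument using only identities already proved in the paper; what the paper's buys is brevity, at the price of invoking a translation-and-reflection lemma it never states formally.
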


\begin{proof}[Proof:\nopunct]  We can write $\cos x$ as \[\cos x=-\sin(x-Q)\]
Because horizontal translations and vertical rotations about the x-axis do not change the period of a function, $\cos x$ is periodic with period $4Q$.
\end{proof} 
\bigskip

\subsection*{Connection to Geometry}

With this result we now show the connection between the analytic and
geometric approaches to trigonometry. 
\bigskip

\begin{thm}[Connection with $\pi$] \[ \int_{0}^{1}\sqrt{1-x^{2}}\, dx=\frac{Q}{2} \]
\end{thm}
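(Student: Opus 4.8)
The plan is to evaluate the integral by the analytic analogue of the trigonometric substitution $x=\sin\theta$, reducing it to an integral of $\cos^2\theta$ over $[0,Q]$ that the power-reduction identity $\cos^2 x=\tfrac12+\tfrac12\cos 2x$ (flagged just before this section) disposes of at once.

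First I would justify that the substitution is legitimate from the analytic definitions alone. Since $\cos\theta>0$ on $(-Q,Q)$, the function $\sin$ is strictly increasing on $[0,Q]$; combined with $\sin 0=0$, $\sin Q=1$ (established above), and continuity, the Intermediate Value Theorem shows that $\sin$ maps $[0,Q]$ continuously and bijectively onto $[0,1]$ while remaining nonnegative. It is moreover $C^1$ with $\tfrac{d}{d\theta}\sin\theta=\cos\theta$, so the change-of-variables theorem for definite integrals applies to $x=\sin\theta$, $dx=\cos\theta\,d\theta$, with limits $\theta=0$ and $\theta=Q$.

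Next I would rewrite the integrand. On $[0,Q]$ we have $\cos\theta\ge 0$, so by the Pythagorean Identity $\sqrt{1-x^2}=\sqrt{1-\sin^2\theta}=\sqrt{\cos^2\theta}=\cos\theta$. Hence
\[
\int_0^1\sqrt{1-x^2}\,dx=\int_0^Q\cos\theta\cdot\cos\theta\,d\theta=\int_0^Q\cos^2\theta\,d\theta.
\]
Applying the power-reduction identity and integrating term by term, using $\tfrac{d}{d\theta}\sin 2\theta=2\cos 2\theta$, yields $\tfrac{Q}{2}+\tfrac14(\sin 2Q-\sin 0)$. Since $\sin 2Q=0$ and $\sin 0=0$ from the values computed in the periodicity section, the boundary term vanishes and the result collapses to $Q/2$.

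The main obstacle is not the computation but making the substitution rigorous from the analytic definitions alone. One must know that $\sin$ carries $[0,Q]$ onto exactly $[0,1]$ (so the limits transform to $0$ and $Q$) and that $\cos\theta\ge 0$ throughout $[0,Q]$ (so that $\sqrt{\cos^2\theta}=\cos\theta$ rather than $-\cos\theta$). Both facts rest on $Q$ being the smallest positive zero of $\cos$ together with $\sin Q=1$, and the vanishing of the boundary term relies on $\sin 2Q=0$. Thus it is the special properties of $Q$, rather than any hard analysis, that make this geometric-looking identity fall out.
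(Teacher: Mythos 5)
Your proposal is correct and follows essentially the same route as the paper: the substitution $x=\sin\theta$, reduction to $\int_0^Q\cos^2\theta\,d\theta$ via the Pythagorean Identity, the power-reduction formula, and the vanishing of the boundary term because $\sin 2Q=0$. In fact you are somewhat more careful than the paper, which silently takes $\sqrt{\cos^2\theta}=\cos\theta$ without remarking that this requires $\cos\theta\ge 0$ on $[0,Q]$.
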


\begin{proof}[Proof:\nopunct] Use the substitution
\[
x=\sin \theta 
\]%
with the values 
\begin{tabular}{c|c}
$x$ & $\theta $ \\ \hline
$0$ & $0$ \\ 
$1$ & $Q$ \\ 
\end{tabular}
\ so that the integral becomes\medskip 
\begin{align*}
\int_{0}^{Q}\sqrt{1-\sin ^{2}\theta }\cos \theta \,d\theta
&=\int_{0}^{Q}\cos ^{2}\theta \,d\theta \\
&=\int_{0}^{Q}\frac{1}{2}\left( 1+\cos 2\theta \right) \,d\theta \\
&=\frac{1}{2}\left[ \theta +\frac{1}{2}\sin 2\theta \right] _{0}^{Q} \\
&=\frac{1}{2}\left[ \left( Q+\frac{1}{2}\sin 2Q\right) -\left( 0+\frac{1}{2}%
\sin \left( 2\cdot 0\right) \right) \right] \\
&=\frac{1}{2}Q
\end{align*}%
\end{proof} 
\medskip 

The integral $\int_{0}^{1}\sqrt{1-x^{2}}\,dx$ represents the
quarter-circle area enclosed by the unit circle, the nonnegative $x$-axis,
and the nonnegative $y$-axis, and so we are led to the conclusion that \[Q=\pi/2\].\medskip 

Using what we have previously developed about multiples of $Q$, we have a table restating the values for sine and cosine in terms of $\pi $ instead of $Q$.\medskip

\begin{center}
\begin{tabular}{c|c|c|r|r|c}
$x$ & $0$ & $\pi /2$ & $\pi $ & $3\pi /2$ & $2\pi $ \\ \hline
$\sin x$ & $0$ & $1$ & $0$ & $-1$ & $0$ \\ 
$\cos x$ & $1$ & $0$ & $-1$ & $0$ & $1$ \\ 
\end{tabular}%
\medskip
\end{center}

From this follows the usual information about the graphs of the sine and
cosine: intervals for positive/negative values, intervals for
increasing/decreasing, local (and absolute) maximums/minimums.\bigskip

Without geometry, we can find the values of sine and cosine of $\dfrac{\pi }{%
4}$, $\dfrac{\pi }{6}$, $\dfrac{\pi }{3}$ using only the sum and difference
identities. We include the development of these values in Appendix A. In Appendix B we present the mathematics that connects the sine and cosine functions, defined here as power series, to the trig functions defined using the unit circle.

 \bigskip
 
\subsection*{Pythagorean Identity Revisited}

We conclude this study with the observation that the \textbf{converse} of the
Pythagorean Identity also holds.\medskip

\begin{thm} If $f:\mathbb{R} \to \mathbb{R}$ is analytic, $f\,^{\prime
}\left( 0\right) =1$, $\ f\left( 0\right) =0$, and $f$ satisfies the
Pythagorean Identity
\[
\left( f\left( x\right) \right) ^{2}+\left( f\,^{\prime }\left( x\right)
\right) ^{2}=1 
\]%
for all $x$, then $f$ $\left( x\right) \equiv \sin x$.
\end{thm}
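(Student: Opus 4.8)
The plan is to differentiate the hypothesized identity, factor, and then use analyticity to recover the defining differential equation $f'' + f = 0$, after which the uniqueness established at the outset finishes the job. Differentiating $\bigl(f(x)\bigr)^2 + \bigl(f'(x)\bigr)^2 = 1$ gives
\[
2 f(x) f'(x) + 2 f'(x) f''(x) = 0,
\]
which I rewrite as $2 f'(x)\bigl(f(x) + f''(x)\bigr) = 0$ for all $x$. Writing $g(x) = f(x) + f''(x)$, which is analytic because $f$ is, the content of the identity is the pointwise product relation $f'(x)\,g(x) = 0$ on all of $\mathbb{R}$.

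The main obstacle is that this product relation does not at once yield $g \equiv 0$: wherever $f'(x) = 0$ the product vanishes regardless of the value of $g$. Here analyticity is indispensable. I would argue by contradiction. If $g(x_0) \neq 0$ for some $x_0$, then by continuity $g$ is nonzero on an open interval about $x_0$, and on that interval the relation $f'(x)\,g(x) = 0$ forces $f' \equiv 0$. But a real-analytic function that vanishes on an open interval vanishes identically, which contradicts $f'(0) = 1$. Therefore $g \equiv 0$; that is, $f'' + f = 0$ throughout $\mathbb{R}$.

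Finally, $f$ satisfies $f'' + f = 0$ together with $f(0) = 0$ and $f'(0) = 1$, so by the uniqueness clause of the Existence and Uniqueness Theorem cited earlier, $f(x) \equiv \sin x$. It is worth noting that the hypothesis of analyticity cannot be dropped: without it one could splice together arcs on which $f'$ vanishes with arcs solving the differential equation, so the isolated-zeros property of nonzero analytic functions is precisely what makes the converse hold.
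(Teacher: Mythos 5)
Your proof is correct, and it follows the same opening moves as the paper --- differentiate the identity, factor to get $2f'(x)\left(f(x)+f''(x)\right)=0$, and then lean on analyticity to get past the places where $f'$ might vanish --- but the two arguments deploy analyticity at different points. The paper works only locally: since $f'(0)=1$, $f'$ is positive on some open interval about $0$, so $f''+f=0$ holds there, the initial conditions force $f=\sin$ on that interval, and the identity theorem (two analytic functions agreeing on an open interval agree on all of $\mathbb{R}$) is applied to $f$ and $\sin$ to finish. You instead apply the identity theorem to $f'$ itself: if $g=f+f''$ were nonzero somewhere, it would be nonzero on an open interval, forcing $f'$ to vanish there and hence everywhere, contradicting $f'(0)=1$; thus the differential equation $f''+f=0$ holds globally, and a single appeal to the uniqueness theorem for the initial value problem closes the argument. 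Your route has the small advantage of establishing the differential equation on all of $\mathbb{R}$ rather than just near the origin, and it makes fully explicit the step the paper leaves somewhat compressed (why $f$ equals $\sin$ beyond the interval where $f'>0$). Your closing remark that analyticity cannot be dropped --- because without it one could splice together locally constant arcs with arcs solving the equation --- is a worthwhile observation that the paper does not make.
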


\begin{proof}[Proof:\nopunct] Differentiation of both sides gives%
\[
2\left( f\left( x\right) \right) f\,^{\prime }\left( x\right) +2f\,^{\prime
}\left( x\right) f\,^{\prime \prime }\left( x\right) =0 
\]%
so that\medskip 
\[
2f\,^{\prime }\left( x\right) \left( f\left( x\right) +f\,^{\prime \prime
}\left( x\right) \right) =0 
\]

Since $f'(0)=1$, and $f$ is analytic, $f'$ is positive on some open interval containing $0$.  Therefore, \textbf{on this interval}, 

\[
f\left( x\right) +f\,^{\prime \prime }\left( x\right) =0,
\]
\medskip 

and $f(x) = \sin(x)$.  Moreover, if two analytic functions agree on an open interval, then they agree on $\mathbb{R}$.

\end{proof} 
\medskip

\subsection*{Summary and Conclusions}

We have developed the theorems and identities of basic trigonometry using the definition of the sine function as the solution, expressed as a power series, of a certain second order linear homogeneous differential equation. The key theorems in this study are the Pythagorean Identity, the Sine Sum Identity, and the special value $Q$, which turned out to be $\pi/2$. From these the other identities follow. The interested reader is referred to Landau, chapter 16, in which the sine and cosine functions are developed from a power series definition. In a brief note, Appendix III in Hardy uses the definition of the inverse tangent function as an integral to lead to the definitions of sine, cosine, and their sum laws.

In a future study we plan to consider a generalization of the sine and cosine functions, and show that versions of the Key Theorems still hold in these settings.

\medskip

\pagebreak

\subsection*{REFERENCES}

\noindent
Hardy, G. H. \ 1921. \textit{A Course of Pure Mathematics, Third Edition}. Mineola, New York:
Dover\\
\indent
Publications, Inc., 2018. An unabridged republication of the work originally published by\\
\indent
the Cambridge University Press, Cambridge.\\

\noindent
Landau, Edmund. 1965. \textit{Differential and Integral Calculus}. Providence, Rhode Island: AMS\\
\indent
Chelsea Publishing. Reprinted by the American Mathematical Society, 2010, from a translation\\
\indent
of Edmund Landau's 1934 \textit{Einf\"{u}hring in die Differentialrechnung und Integralrechnung}.\\

\noindent
Nagle, R. Kent, Edward B. Saff, and Arthur David Snider. 2008. \textit{Fundamentals of
Differential Equations}.\\
\indent
Boston: Pearson Addison Wesley.\\

\noindent
Rudin, Walter. 1964. \textit{Principles of Mathematical Analysis}. New York: McGraw-Hill Book Company.\\

\noindent
Willard, Stephen. 1970. \textit{General Topology}. Reading, Massachusetts: Addison-Wesley Publishing Company.

\pagebreak

\subsection*{Appendix A: Trig Functions of Special Angles}

First we consider $\dfrac{\pi }{4}$.\medskip

\[
\sin \left( \dfrac{\pi }{4}\right) =\cos \left( \dfrac{\pi }{2}-\dfrac{\pi }{%
4}\right) =\cos \left( \dfrac{\pi }{4}\right) 
\]

Since%
\[
1=\sin ^{2}\left( \dfrac{\pi }{4}\right) +\cos ^{2}\left( \dfrac{\pi }{4}%
\right) =2\sin ^{2}\left( \dfrac{\pi }{4}\right) 
\]%
we obtain 
\[
\sin \left( \dfrac{\pi }{4}\right) =\dfrac{\sqrt{2}}{2}=\cos \left( \dfrac{%
\pi }{4}\right) 
\]%
\medskip

To find values for $\sin \dfrac{\pi }{6}$, we need the triple-angle identity
\[
\sin \left( 3\theta \right) =3\sin \theta -4\sin ^{3}\theta 
\]
This follows from expanding $\sin(2\theta+\theta)$. We can now write
\begin{align*}
1 &=\sin \dfrac{\pi }{2} \\
&=\sin \left( 3\cdot \dfrac{\pi }{6}\right) \\
&=3\sin \dfrac{\pi }{6}-4\sin ^{3}\dfrac{\pi }{6}
\end{align*}%
We solve this cubic equation in $\sin \dfrac{\pi }{6}$ to obtain a double
solution $\dfrac{1}{2}$ and single solution $-1$.  Because $\sin \dfrac{\pi 
}{6}>0$,%
\begin{align*}
\sin \dfrac{\pi }{6} &=\dfrac{1}{2}\text{, and} \\
\cos \dfrac{\pi }{6} &=\dfrac{\sqrt{3}}{2}
\end{align*}%
\medskip Here's a summary table:%
\[
\begin{tabular}{c|c|c|c|c|c}
$x$ & $0$ & $\pi /6$ & $\pi /4$ & $\pi /3$ & $\pi /2$ \\ \hline
$\sin x$ & $0$ & $1/2$ & $\sqrt{2}/2$ & $\sqrt{3}/2$ & $1$ \\ 
$\cos x$ & $1$ & $\sqrt{3}/2$ & $\sqrt{2}/2$ & $1/2$ & $0$ \\ 
\end{tabular}%
\]

\subsection*{Appendix B: Connection to Unit Circle Trigonometry}
\medskip 

\begin{lem}

If $f$ is continuous on $\left[ a,b\right] $ and strictly
increasing on $\left( a,b\right) $, then $f$ is strictly increasing on $%
\left[ a,b\right] $.

\end{lem}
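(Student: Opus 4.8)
The plan is to reduce the claim to the cases involving the endpoints, since strict monotonicity on the open interval $(a,b)$ is already assumed as a hypothesis. Concretely, I would establish $f(x) < f(y)$ whenever $a \le x < y \le b$, and the only configurations not covered by the hypothesis are those in which $x = a$ or $y = b$ (or both). So the work splits into three sub-claims: first, $f(a) < f(c)$ for every $c \in (a,b)$; second, $f(c) < f(b)$ for every $c \in (a,b)$; and third, $f(a) < f(b)$.

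For the first sub-claim I would exploit continuity at the left endpoint. Fixing $c \in (a,b)$, I would choose an intermediate point $d$ with $a < d < c$. Interior monotonicity gives $f(s) < f(d)$ for every $s \in (a,d)$, and letting $s \to a^{+}$ while invoking continuity of $f$ at $a$ yields the weak inequality $f(a) \le f(d)$. A second application of interior monotonicity (since $a < d < c < b$) gives $f(d) < f(c)$, so that
\[
f(a) \le f(d) < f(c),
\]
which is the desired strict inequality. The second sub-claim is proved by the symmetric argument at the right endpoint: pick $d$ with $c < d < b$, pass to the limit $s \to b^{-}$ using continuity at $b$ to obtain $f(d) \le f(b)$, and combine with $f(c) < f(d)$. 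The third sub-claim then follows by chaining the first two through any single interior point $c$, giving $f(a) < f(c) < f(b)$.

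The one genuinely delicate point is that continuity only delivers a \emph{weak} inequality in the limit: taking $s \to a^{+}$ in $f(s) < f(d)$ collapses the strict $<$ to $\le$, since strict inequalities need not survive a limiting process. The key device for recovering strictness is therefore the interposition of the extra point $d$ strictly between the endpoint and the target point $c$; this buys one more application of the open-interval hypothesis and upgrades the $\le$ back to a $<$. Once the three sub-claims are in hand, assembling the full statement is routine: any pair $x < y$ in $[a,b]$ either lies entirely in the interior (covered by hypothesis) or matches one of the three endpoint configurations, and in every case $f(x) < f(y)$, so $f$ is strictly increasing on $[a,b]$.
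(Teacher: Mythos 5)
Your argument is correct, and it reaches the conclusion by a genuinely different route than the paper. You argue directly: interpose a point $d$ with $a<d<c$, observe $f(s)<f(d)$ for all $s\in(a,d)$, pass to the limit $s\to a^{+}$ using continuity at $a$ to get the weak inequality $f(a)\le f(d)$, and then recover strictness from the extra open-interval comparison $f(d)<f(c)$. The paper instead argues by contradiction: assuming $f(a)\ge f(c)$, it splits into the cases $f(a)>f(c)$ and $f(a)=f(c)$, handles the first with an explicit $\varepsilon$--$\delta$ use of right-continuity at $a$ (producing a point $x$ close to $a$ with $f(x)>f(c)$ yet $x<c$, contradicting interior monotonicity), and reduces the second case to the first via the midpoint $\tfrac{a+c}{2}$. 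Your version buys a cleaner, case-free argument whose only subtlety is the one you correctly flag --- that limits degrade strict inequalities to weak ones, which is exactly why the interposed point $d$ is needed; the paper's version avoids any explicit limit of function values but pays for it with the case analysis and the $\varepsilon$--$\delta$ bookkeeping. You also state the endpoint-to-endpoint comparison $f(a)<f(b)$ explicitly, which the paper leaves implicit after proving the two one-sided claims. Both proofs rest on the same two ingredients (one-sided continuity at the endpoints plus strict monotonicity on the interior), so the difference is one of execution rather than of underlying idea.
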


\begin{proof}[Proof:\nopunct] We first show that for any $x$ in the interval $\left(
a,b\right) $, we must have $f\left( a\right) <f\left( x\right) $. Assume, to the
contrary, that there exists $c$, $\ a<c<b$, such that $f\left( a\right) \geq
f\left( c\right) $.\medskip 

Case I: \ $f\left( a\right) >f\left( c\right) $. \ \ Let 
\[
\varepsilon =\frac{f\left( c\right) -f\left( a\right) }{2} 
\]%
Then by right-hand continuity of $f$ at $a$, there exists $\delta $, $%
0<\delta <c-a$, such that if $a<x<a+\delta $ then%
\[
f\left( a\right) -\varepsilon <f\left( x\right) <f\left( a\right)
+\varepsilon 
\]%
Then%
\begin{align*}
f\left( x\right) &>f\left( a\right) -\frac{f\left( c\right) -f\left(
a\right) }{2} \\
&=\frac{f\left( a\right) +f\left( c\right) }{2} \\
&>\frac{f\left( c\right) +f\left( c\right) }{2} \\
&=f\left( c\right)
\end{align*}%
Since $x$, $c$ are both in $\left( a,b\right) $ \ and $x<a+\delta <a+\left(
c-a\right) =c$, we must have $f\left( x\right) <f\left( c\right) $, a
contradiction. Therefore it cannot be the case that $f\left( a\right) \geq
f\left( c\right) $ for some $c\in \left( a,b\right) $.\medskip

Case II: \ $f\left( a\right) =f\left( c\right) $.  Then consider $\frac{a+c}{2}\in \left( a,c\right) \subset \left( a,b\right) $.  Since $f$ is strictly
increasing on $\left( a,b\right) $, it follows that 
\[
f\left( \frac{a+c}{2}\right) <f\left( c\right) =f\left( a\right) 
\]
and we have the situation of Case I.\medskip

A similar argument shows that if $a<x<b$, then $f\left( x\right) <f\left(
b\right) $.

\end{proof}

\bigskip

\begin{thm} 
The function $\sin x$ is strictly increasing on $\left[
-\frac{\pi }{2},\frac{\pi }{2}\right] $.
\end{thm}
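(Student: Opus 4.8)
The plan is to combine the Lemma just established with the strict monotonicity of $\sin x$ on the \emph{open} interval $(-\pi/2, \pi/2)$, which we have already secured. Recall that when proving $\sin Q = 1$ we observed that $\cos x > 0$ for every $x \in (-Q, Q)$: since $\cos 0 = 1$, cosine is even, and $Q$ is the smallest positive zero of cosine, the Intermediate Value Theorem forbids $\cos$ from vanishing anywhere strictly between $-Q$ and $Q$, so it remains positive there. Because $\frac{d}{dx}\sin x = \cos x > 0$ on $(-Q, Q)$, the sine function is strictly increasing on that open interval, and since we have identified $Q = \pi/2$, this is precisely strict monotonicity on $(-\pi/2, \pi/2)$.

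First I would record that $\sin x$ is continuous on the closed interval $[-\pi/2, \pi/2]$; this is immediate because $\sin$ is differentiable on all of $\mathbb{R}$. With continuity on the closed interval and strict monotonicity on the open interval both in hand, the hypotheses of the Lemma are met exactly, taking $a = -\pi/2$ and $b = \pi/2$. Applying the Lemma then delivers the conclusion that $\sin x$ is strictly increasing on the full closed interval $[-\pi/2, \pi/2]$.

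I do not expect any real obstacle, since the theorem is essentially a packaging of earlier results. The one subtlety worth flagging is that the derivative-sign argument establishes strict increase only on the open interval and says nothing about the endpoints $\pm\pi/2$; closing that gap at the two endpoints is exactly the role the Lemma was introduced to play. All the substantive analysis having been carried out previously, the proof amounts to citing the correct ingredients in the right order.
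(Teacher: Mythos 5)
Your proof is correct and follows essentially the same route as the paper: establish $\cos x>0$ on the open interval $\left(-\frac{\pi}{2},\frac{\pi}{2}\right)$ from the minimality of $Q$ and the evenness of cosine, conclude strict increase there from the sign of the derivative, and then invoke the Lemma together with continuity to extend the strict monotonicity to the closed interval. No gaps.
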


\begin{proof}[Proof:\nopunct]
By our development and definition of $\frac{\pi }{2}$
as least in $\left[ 0,2\right] $ with $\cos \left( \frac{\pi }{2}\right) =0$%
, $\cos x$ is positive on $\left[ 0,\frac{\pi }{2}\right) .$ Since it is an
even function, it is positive on $\left( -\frac{\pi }{2},\frac{\pi }{2}%
\right) $ and therefore $\sin x$ is increasing on $\left( -\frac{\pi }{2},%
\frac{\pi }{2}\right) $. The function $\sin x$ is differentiable and
therefore continuous at all $x$, and by the lemma must be increasing on $%
\left[ -\frac{\pi }{2},\frac{\pi }{2}\right] $.

\end{proof}

\medskip

We see, then, that the sine function restricted to $\left[ -\frac{\pi }{2},%
\frac{\pi }{2}\right] $ is a bijection onto $\left[ -1,1\right] $.\medskip

\begin{defn}[Inverse Sine Function]\ The \textit{inverse sine function of $x$}, written here as $\arcsin x$, is defined: 
\[
\arcsin x:\left[ -1,1\right] \rightarrow \left[ -\frac{\pi }{2},\frac{\pi }{2%
}\right] 
\]
is the inverse of the sine function restricted to the domain $\left[ -\frac{\pi }{2},%
\frac{\pi }{2}\right] $.\bigskip

\end{defn}

We now consider the derivative of $\arcsin x$, $-1<x<1$.\bigskip

\begin{thm}
If $-1<x<1$ then $\frac{d}{dx}\arcsin x=\frac{1}{\sqrt{%
1-x^{2}}}$.
\end{thm}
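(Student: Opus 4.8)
The plan is to use the standard inverse-function differentiation argument, the only real subtlety being to first justify that $\arcsin$ is differentiable at all. We have already established that $\sin x$ is continuous on $\left[-\frac{\pi}{2},\frac{\pi}{2}\right]$ and strictly increasing there, hence a bijection onto $[-1,1]$, and that $\cos x > 0$ on the open interval $\left(-\frac{\pi}{2},\frac{\pi}{2}\right)$. Given $x$ with $-1 < x < 1$, I would set $y = \arcsin x$, so that $\sin y = x$ and $y$ lies in the \emph{open} interval $\left(-\frac{\pi}{2},\frac{\pi}{2}\right)$, where $\cos y > 0$.

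First I would invoke the inverse function theorem for real-valued functions of one variable: since $\sin$ is differentiable at $y$ with derivative $\cos y \neq 0$, its inverse $\arcsin$ is differentiable at $x = \sin y$. Next I would differentiate the identity $\sin(\arcsin x) = x$ by the chain rule to get $\cos(\arcsin x)\cdot\frac{d}{dx}\arcsin x = 1$, hence $\frac{d}{dx}\arcsin x = \frac{1}{\cos y}$. To express the denominator in terms of $x$, I would apply the Pythagorean Identity, $\cos^2 y = 1 - \sin^2 y = 1 - x^2$, and because $y$ lies in the open interval where cosine is positive, take the positive root $\cos y = \sqrt{1 - x^2}$. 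Substituting yields $\frac{d}{dx}\arcsin x = \frac{1}{\sqrt{1-x^2}}$, as claimed.

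The main obstacle is precisely the differentiability of the inverse, since this is not automatic and is the one step not already supplied by the earlier results. In keeping with the analytic spirit of the paper, the cleanest way to supply it is to argue directly from the definition of the derivative: continuity of $\arcsin$ follows from the fact that $\sin$ is a continuous strictly monotone bijection on a compact interval (so its inverse is continuous), and then the difference quotient for $\arcsin$ at $x$ is the reciprocal of the difference quotient for $\sin$ at $y$; letting the increment tend to $0$ and using $\cos y \neq 0$ gives the reciprocal-derivative formula. Everything after this point is the short computation above, so the proof reduces to stating the inverse function rule and invoking the Pythagorean Identity with the correct sign.
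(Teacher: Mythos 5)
Your proof is correct and follows essentially the same route as the paper: differentiate the identity $\sin(\arcsin x)=x$, solve for the derivative, and use the Pythagorean Identity together with the fact that $y=\arcsin x$ lies in $\left(-\frac{\pi}{2},\frac{\pi}{2}\right)$ to select the positive square root for $\cos y$. You are in fact more careful than the paper, whose proof proceeds directly by implicit differentiation without first establishing that $\arcsin$ is differentiable; your explicit appeal to the inverse function theorem (or the reciprocal difference-quotient argument, using continuity of the inverse of a continuous strictly monotone bijection and $\cos y\neq 0$) supplies exactly the step the paper leaves tacit.
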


\begin{proof}[Proof:\nopunct]
If $y=\arcsin x$, $-1<x<1$, then $-\frac{\pi }{2}<y<%
\frac{\pi }{2}$ and 
\[
\sin y=x 
\]
Using implicit differentiation, 
\[
\cos y\frac{dy}{dx}=1 
\]%
and 
\[
\frac{dy}{dx}=\frac{1}{\cos y} 
\]%
What is $\cos y=\cos (\arcsin x)$? By the Pythagorean Identity, 
\begin{align*}
\cos ^{2}y+\sin ^{2}y &=1 \\
\cos ^{2}y+x^{2} &=1
\end{align*}%
and so $\cos y=\sqrt{1-x^{2}}$ or $\cos y=-\sqrt{1-x^{2}}$.  Since $-\frac{%
\pi }{2}<y<\frac{\pi }{2}$, we have $\cos y>0$.  Thus $\cos y=\sqrt{1-x^{2}}$ and%
\[
\frac{dy}{dx}=\frac{1}{\sqrt{1-x^{2}}} 
\]
\end{proof}

\bigskip

\begin{cor}
 If $-1<x<1$, then%
\[
\arcsin x=\int_{0}^{x}\frac{1}{\sqrt{1-t^{2}}}dt 
\]
\end{cor}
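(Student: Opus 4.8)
The plan is to recognize that the substantive analytic work has already been carried out in the preceding theorem, which computes the derivative of $\arcsin x$. As a result, this corollary reduces to a direct application of the Fundamental Theorem of Calculus. First I would define
\[
g\left( x\right) =\int_{0}^{x}\frac{1}{\sqrt{1-t^{2}}}\,dt
\]
for $x\in \left( -1,1\right) $, pausing only to confirm that this integral is well defined. The integrand is continuous on the open interval $\left( -1,1\right) $, and for any fixed $x$ with $\left\vert x\right\vert <1$ the path of integration is a closed subinterval of $\left( -1,1\right) $ on which the integrand is bounded. Hence the integral exists as a proper Riemann integral, since the singularities at $\pm 1$ are never reached.

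Next, because the integrand is continuous, the Fundamental Theorem of Calculus yields $g\,^{\prime }\left( x\right) =\frac{1}{\sqrt{1-x^{2}}}$ on $\left( -1,1\right) $. The previous theorem established that $\frac{d}{dx}\arcsin x=\frac{1}{\sqrt{1-x^{2}}}$ on the same interval. Thus $\arcsin x$ and $g\left( x\right) $ have identical derivatives throughout the connected interval $\left( -1,1\right) $, so by the standard consequence of the Mean Value Theorem their difference is constant there.

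To determine the constant I would evaluate at $x=0$. Since $\sin 0=0$ and $0\in \left[ -\frac{\pi }{2},\frac{\pi }{2}\right] $, we have $\arcsin 0=0$; and $g\left( 0\right) =0$ because the integral over a degenerate interval vanishes. Therefore the constant is $0$, giving $\arcsin x=g\left( x\right) $ for all $x\in \left( -1,1\right) $, which is precisely the claim.

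I do not anticipate a genuine obstacle in this argument. The only point requiring a moment's care is verifying that the integral is proper for every $\left\vert x\right\vert <1$, so that the Fundamental Theorem applies without any need to invoke improper integrals. Everything else follows from the elementary "equal derivatives imply constant difference" principle together with the derivative of the arcsine already computed above.
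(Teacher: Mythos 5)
Your proof is correct and is exactly the argument the paper has in mind: the paper's own proof consists of the single sentence ``This follows from the Fundamental Theorem of Calculus,'' and your writeup simply supplies the details (properness of the integral on $(-1,1)$, equal derivatives on a connected interval, and evaluation at $x=0$ to fix the constant). No differences in approach to report.
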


\begin{proof}[Proof:\nopunct]

This follows from the Fundamental Theorem of Calculus.

\end{proof}

\bigskip 

Does this equation hold for $x=1$ and for $x=-1$?\medskip

\begin{thm}

\[
\int_{0}^{1}\frac{1}{\sqrt{1-t^{2}}}dt=\frac{\pi }{2} 
\]
\end{thm}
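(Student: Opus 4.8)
The plan is to treat the integral as improper at the upper endpoint $t=1$, where the integrand $\frac{1}{\sqrt{1-t^2}}$ is unbounded, and to evaluate it as a one-sided limit. By definition,
\[
\int_{0}^{1}\frac{1}{\sqrt{1-t^{2}}}\,dt=\lim_{b\to 1^{-}}\int_{0}^{b}\frac{1}{\sqrt{1-t^{2}}}\,dt.
\]
For each $b$ with $0<b<1$, the preceding corollary applies directly and gives $\int_{0}^{b}\frac{1}{\sqrt{1-t^{2}}}\,dt=\arcsin b$. Thus the entire problem reduces to computing $\lim_{b\to 1^{-}}\arcsin b$. Since the integrand is positive, this family of integrals is increasing in $b$, and each value $\arcsin b$ is bounded above by $\frac{\pi}{2}$, so the limit exists; the only remaining issue is to identify its value.

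First I would establish that $\arcsin$ is continuous at $x=1$. We have already seen that $\sin x$ is strictly increasing and continuous on $\left[-\frac{\pi}{2},\frac{\pi}{2}\right]$ and is a bijection from this interval onto $[-1,1]$. Because the inverse of a continuous, strictly monotone bijection between compact intervals is itself continuous, $\arcsin$ is continuous on all of $[-1,1]$, and in particular left-continuous at $1$. Therefore $\lim_{b\to 1^{-}}\arcsin b=\arcsin 1$. Finally, since $\sin\left(\frac{\pi}{2}\right)=1$ and $\frac{\pi}{2}$ lies in the domain $\left[-\frac{\pi}{2},\frac{\pi}{2}\right]$ of the restricted sine, we conclude $\arcsin 1=\frac{\pi}{2}$, which yields the claim.

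The step I expect to require the most care is the passage to the limit at the singular endpoint: one must confirm that the improper integral genuinely converges and that continuity of $\arcsin$ at the boundary point $1$ legitimately replaces $\lim_{b\to 1^{-}}\arcsin b$ by $\arcsin 1$. An alternative route, closer to the earlier Connection with $\pi$ theorem, would be the substitution $t=\sin\theta$, under which the integrand simplifies to $1$ and the transformed integral $\int_{0}^{\pi/2}d\theta$ is proper; however, that substitution still sweeps across the point where $\cos\theta=0$ and so carries its own endpoint subtlety. For that reason I would favor the monotonicity-and-continuity argument above, where the delicate behavior is isolated cleanly in the single fact that $\arcsin$ is continuous up to and including $x=1$.
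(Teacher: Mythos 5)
Your proof is correct, but it takes a genuinely different route from the paper's. The paper never argues that $\arcsin$ is continuous at the endpoint $x=1$. Instead it sets $g(x)=\int_{0}^{x}\frac{dt}{\sqrt{1-t^{2}}}$, computes $g\bigl(\tfrac{\sqrt{2}}{2}\bigr)=\tfrac{\pi}{4}$ from $\sin\tfrac{\pi}{4}=\tfrac{\sqrt{2}}{2}$, splits the improper integral at $\tfrac{\sqrt{2}}{2}$, and applies the substitution $u=\sqrt{1-t^{2}}$ to the singular tail $\lim_{b\to 1^{-}}\int_{\sqrt{2}/2}^{b}\frac{dt}{\sqrt{1-t^{2}}}$. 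That substitution maps the neighborhood of the singularity at $t=1$ back onto the well-behaved interval $\bigl[0,\tfrac{\sqrt{2}}{2}\bigr]$ and reproduces the same integrand, so the tail equals $g\bigl(\tfrac{\sqrt{2}}{2}\bigr)$ again and $g(1)=2\cdot\tfrac{\pi}{4}=\tfrac{\pi}{2}$. You instead reduce everything to $\lim_{b\to 1^{-}}\arcsin b=\arcsin 1$ via the fact that the inverse of a continuous strictly monotone bijection between compact intervals is continuous. Your argument is shorter and more conceptual, and the monotone-convergence remark guaranteeing existence of the limit is sound (if slightly redundant once continuity is in hand); its one external dependency is that inverse-function continuity theorem, which the paper nowhere proves, though it is standard real analysis and consistent with the paper's stated prerequisites. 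The paper's substitution trick buys independence from that theorem at the cost of needing the special value $\sin\tfrac{\pi}{4}=\tfrac{\sqrt{2}}{2}$ and a slightly fussier limit computation; it also exposes the pleasant symmetry that the two halves of the quarter-arc contribute equal amounts to the integral. Either proof is acceptable here.
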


\begin{proof}[Proof:\nopunct] Let $g\left( x\right) =\displaystyle\int_{0}^{x}\dfrac{dt}{\sqrt{%
1-t^{2}}}$, $0\leq x\leq 1$.

\medskip

What is the value of $g\left( 1\right) $, an improper integral? First, we
know that $g\left( x\right) =\arcsin x$ for $x\in \lbrack 0,1)$ and that%
\[
\sin \dfrac{\pi }{4}=\cos \left( \dfrac{\pi }{2}-\dfrac{\pi }{4}\right)
=\cos \left( \dfrac{\pi }{4}\right) 
\]%
Since $\sin ^{2}x+\cos ^{2}x=1$, we have 
\[
\sin \dfrac{\pi }{4}=\cos \dfrac{\pi }{4}=\frac{\sqrt{2}}{2} 
\]%
Therefore%
\[
g\left( \tfrac{\sqrt{2}}{2}\right) =\dfrac{\pi }{4} 
\]%
We are now ready to find $g\left( 1\right) $.%
\begin{align*}
g\left( 1\right) &=\lim_{b\rightarrow 1^{-}}\int_{0}^{b}\frac{dt}{\sqrt{%
1-t^{2}}} \\
&=\int_{0}^{\sqrt{2}/2}\frac{dt}{\sqrt{1-t^{2}}}+\lim_{b\rightarrow
1^{-}}\int_{\sqrt{2}/2}^{b}\frac{dt}{\sqrt{1-t^{2}}} \\
&=g\left( \tfrac{\sqrt{2}}{2}\right) +\lim_{b\rightarrow 1^{-}}\int_{\sqrt{%
2}/2}^{b}\frac{dt}{\sqrt{1-t^{2}}} \\
&=\dfrac{\pi }{4}+\lim_{b\rightarrow 1^{-}}\int_{\sqrt{2}/2}^{b}\frac{dt}{%
\sqrt{1-t^{2}}}
\end{align*}%
With the substitution $u=\sqrt{1-t^{2}}$ we obtain for this last integral%
\begin{align*}
\lim_{b\rightarrow 1^{-}}\int_{\sqrt{2}/2}^{b}\frac{dt}{\sqrt{1-t^{2}}}
&=\lim_{b\rightarrow 1^{-}}\int_{\sqrt{2}/2}^{\sqrt{1-b^{2}}}\dfrac{1}{u}%
\cdot \dfrac{-u}{\sqrt{1-u^{2}}}du \\
&=-\lim_{b\rightarrow 1^{-}}\int_{\sqrt{2}/2}^{\sqrt{1-b^{2}}}\dfrac{1}{%
\sqrt{1-u^{2}}}du \\
&=-\int_{\sqrt{2}/2}^{0}\dfrac{1}{\sqrt{1-u^{2}}}du \\
&=\int_{0}^{\sqrt{2}/2}\frac{du}{\sqrt{1-u^{2}}} \\
&=g\left( \tfrac{\sqrt{2}}{2}\right) \\
&=\frac{\pi }{4}
\end{align*}%
so that%
\[
g\left( 1\right) =g\left( \tfrac{\sqrt{2}}{2}\right) +g\left( \tfrac{\sqrt{2}%
}{2}\right) =\dfrac{\pi }{2} 
\]%

\end{proof}

\bigskip

\begin{cor} $\displaystyle\int_{0}^{-1}\frac{1}{\sqrt{1-t^{2}}}dt=-\frac{\pi }{2}$
\end{cor}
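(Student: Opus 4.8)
The plan is to exploit the evenness of the integrand $1/\sqrt{1-t^{2}}$ and reduce the claim to the preceding theorem, which already establishes $\int_{0}^{1}\frac{dt}{\sqrt{1-t^{2}}}=\pi/2$. Since no new information about the special functions is needed, this should be a short symmetry argument rather than a fresh computation.

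Concretely, I would apply the substitution $u=-t$, so that $du=-dt$, the lower limit $t=0$ becomes $u=0$, and the target limit $t=-1$ corresponds to $u=1$. Because $(-u)^{2}=u^{2}$, the radical is unchanged, and the integral transforms into $-\int_{0}^{1}\frac{du}{\sqrt{1-u^{2}}}$. Invoking the previous theorem then yields the value $-\pi/2$ at once. An equivalent route, if one prefers to avoid re-running a substitution, is to observe that $g(x)=\int_{0}^{x}\frac{dt}{\sqrt{1-t^{2}}}$ is an odd function, since the integral of an even integrand over $[0,x]$ reverses sign when $x$ is replaced by $-x$; hence $g(-1)=-g(1)=-\pi/2$.

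The only point demanding care is that the integral is improper at the endpoint $t=-1$, so it is properly defined as $\lim_{b\to -1^{+}}\int_{0}^{b}\frac{dt}{\sqrt{1-t^{2}}}$. I would therefore carry out the substitution inside this limit, exactly as in the proof of the preceding theorem, so that $b\to -1^{+}$ is converted to $-b\to 1^{-}$ and the existence of the limit is inherited from the convergence already established at $t=1$. Beyond this routine bookkeeping there is no real obstacle.
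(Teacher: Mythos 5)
Your proposal is correct and matches the paper's own argument: both exploit the evenness of $\frac{1}{\sqrt{1-t^{2}}}$ to flip the integral over $[-a,0]$ onto $[0,a]$, take the limit as the endpoint approaches $1$, and invoke the preceding theorem's value of $\frac{\pi}{2}$. Your explicit attention to the improper endpoint via $\lim_{b\to -1^{+}}$ is the same bookkeeping the paper performs with its $\lim_{a\to 1^{-}}$ formulation.
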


\begin{proof}[Proof:\nopunct]
Since $\frac{1}{\sqrt{1-t^{2}}}$ is an even function,
for $0\leq a<1$, we have%
\[
\int_{-a}^{0}\frac{1}{\sqrt{1-t^{2}}}dt=\int_{0}^{a}\frac{1}{\sqrt{1-t^{2}}}%
dt 
\]

and 
\begin{align*}
\int_{0}^{-1}\frac{1}{\sqrt{1-t^{2}}}dt &=-\int_{-1}^{0}\frac{1}{\sqrt{%
1-t^{2}}}dt \\
&=-\lim_{a\rightarrow 1^{-}}\int_{-a}^{0}\frac{1}{\sqrt{1-t^{2}}}dt \\
&=-\lim_{a\rightarrow 1^{-}}\int_{0}^{a}\frac{1}{\sqrt{1-t^{2}}}dt \\
&=-\frac{\pi }{2}
\end{align*}%
\end{proof}

\bigskip 

We now complete the connection to unit circle trigonometry. 
\medskip

\begin{thm}If $-1\leq a\leq b\leq 1$ then the arc length of the graph
of $y=\sqrt{1-x^{2}}$ from $x=a$ to $x=b$ is \[\arcsin b-\arcsin a.\]
\end{thm}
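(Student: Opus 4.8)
The plan is to invoke the standard arc length formula and reduce the resulting integrand to the familiar arcsine integrand $\frac{1}{\sqrt{1-x^{2}}}$, at which point the earlier results do all the work. First I would write the arc length of $y=\sqrt{1-x^{2}}$ from $x=a$ to $x=b$ as
\[
L=\int_{a}^{b}\sqrt{1+\left(\frac{dy}{dx}\right)^{2}}\,dx,
\]
compute $\frac{dy}{dx}=\frac{-x}{\sqrt{1-x^{2}}}$, and simplify
\[
1+\left(\frac{dy}{dx}\right)^{2}=1+\frac{x^{2}}{1-x^{2}}=\frac{1}{1-x^{2}},
\]
so that the radicand collapses and the integrand becomes simply $\frac{1}{\sqrt{1-x^{2}}}$, giving $L=\int_{a}^{b}\frac{dt}{\sqrt{1-t^{2}}}$.

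Next I would recognize this as a difference of arcsine integrals. By the corollary proved above, $\arcsin x=\int_{0}^{x}\frac{dt}{\sqrt{1-t^{2}}}$ for $-1<x<1$, so additivity of the integral yields
\[
\int_{a}^{b}\frac{dt}{\sqrt{1-t^{2}}}=\int_{0}^{b}\frac{dt}{\sqrt{1-t^{2}}}-\int_{0}^{a}\frac{dt}{\sqrt{1-t^{2}}}=\arcsin b-\arcsin a,
\]
which is exactly the claimed value.

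The main obstacle is that $\frac{dy}{dx}$ has a vertical asymptote at $x=\pm 1$: when $a=-1$ or $b=1$, both the arc length integral and the arcsine integral become improper, so the identity cannot be read off directly from the corollary, which was stated only for $-1<x<1$. Here I would lean on the two convergence results already established, namely $\int_{0}^{1}\frac{dt}{\sqrt{1-t^{2}}}=\frac{\pi}{2}$ and $\int_{0}^{-1}\frac{dt}{\sqrt{1-t^{2}}}=-\frac{\pi}{2}$, interpreting these as $\arcsin 1=\frac{\pi}{2}$ and $\arcsin(-1)=-\frac{\pi}{2}$. With the endpoint limits handled this way, the splitting above extends to the full closed interval $[-1,1]$, and the arc length equals $\arcsin b-\arcsin a$ in every case, completing the connection between the analytic arc length and the geometric arc of the unit circle.
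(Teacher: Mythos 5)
Your proposal follows essentially the same route as the paper's proof: apply the arc length formula, simplify the radicand to $\frac{1}{1-x^{2}}$, and identify $\int_{a}^{b}\frac{dt}{\sqrt{1-t^{2}}}$ as $\arcsin b-\arcsin a$ via the earlier corollary. Your explicit handling of the improper endpoints $a=-1$ and $b=1$ using the two convergence results is a bit more careful than the paper, which simply cites ``the previous result,'' but the argument is the same.
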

\begin{proof}[Proof:\nopunct] First, note that%
\[
\frac{d}{dx}\sqrt{1-x^{2}}=-\frac{x}{\sqrt{1-x^{2}}} 
\]%
Using the arc length formula and the previous result, 
\begin{align*}
\int\nolimits_{a}^{b}\sqrt{1+\left( \frac{dy}{dx}\right) ^{2}}
&=\int_{a}^{b}\sqrt{1+\left( -\frac{x}{\sqrt{1-x^{2}}}\right) ^{2}}\enspace dx \\
&=\int_{a}^{b}\sqrt{1+\frac{x^{2}}{1-x^{2}}}\enspace dx \\
&=\int_{a}^{b}\sqrt{\frac{1}{1-x^{2}}}\enspace dx \\
&=\int_{a}^{b}\frac{1}{\sqrt{1-x^{2}}}\enspace dx \\
&=\arcsin b-\arcsin a
\end{align*}

\end{proof} 
\bigskip

In the particular case that $b=1$, we have that the arc length $s$ along
the upper unit circle from $x=a$ to $x=1$ is%
\[
s=\frac{\pi }{2}-\arcsin a
\]%
Then 
\begin{align*}
\cos s &=\cos \left( \frac{\pi }{2}-\arcsin a\right)  \\
&=\sin \left( \arcsin a\right)  \\
&=a
\end{align*}%
and%
\begin{align*}
\sin s &=\sin \left( \frac{\pi }{2}-\arcsin a\right)  \\
&=\cos \left( \arcsin a\right)  \\
&=\sqrt{1-a^{2}}
\end{align*}%
This shows that a point $P\left( a,\sqrt{1-a^{2}}\right) $ on the upper unit
circle with $-1\leq a\leq 1$ has coordinates $P\left( \cos s,\sin s\right)$ 
where $s$ is the arc length along the upper unit circle from the point $P$ to $%
A\left( 1,0\right) $. This arc length is the definition of the radian
measure of angle $AOP$ where $O=\left( 0,0\right) $.\medskip 

The connection from geometry-free trigonometry to unit circle trigonometry is complete.

\end{document}